\def\Bbb{\mathbb}
\def\eea{\end{eqnarray*}}
\newtheorem{thm}{Theorem}[section]
\newtheorem{prop}[thm]{Proposition}
\newtheorem{cor}[thm]{Corollary}
\newtheorem{lem}[thm]{Lemma}
\newenvironment{xpl}{\mbox{ }\\ {\bf  Example}\mbox{ }}{
\hfill $\diamondsuit$\mbox{}\bigskip}
\newenvironment{rmk}{\mbox{ }\\{\bf  Remark}\mbox{ }}{
\hfill $\Box$\mbox{}\bigskip}
\begin{document}
\renewcommand{\theequation}{\thesection.\arabic{equation}}

\title[$G$-monopole classes, Ricci flow, and Yamabe invariants]{$G$-monopole classes, Ricci flow, and Yamabe invariants of 4-manifolds}

\author{Chanyoung Sung}
\date{\today}

\address{Dept. of Mathematics and Institute for Mathematical Sciences \\
Konkuk University\\
         120 Neungdong-ro, Gwangjin-gu, Seoul, KOREA}
\email{cysung@kias.re.kr}
\thanks{This work was supported by the National Research Foundation of Korea(NRF) grant funded by the Korea government(MEST). (No. 2011-0002791, 2012-0000341)}
\keywords{Seiberg-Witten equations, monopole class, Ricci flow,  Einstein metric, Yamabe invariant}
\subjclass[2010]{57M60, 53C44, 57R57, 58E40}

\begin{abstract}
On a smooth closed oriented $4$-manifold $M$ with a smooth action by a finite group $G$, we show that a $G$-monopole class gives the $L^2$-estimate of the Ricci curvature of a $G$-invariant Riemannian metric, and derive a topological obstruction to the existence of a $G$-invariant nonsingular solution to the normalized Ricci flow on $M$.

In particular, for certain $m$ and $n$, $m\Bbb CP_2 \# n\overline{\Bbb CP}_2$ admits an infinite family of topologically equivalent but smoothly distinct non-free actions of $\Bbb Z_d$ such that it admits no nonsingular solution to the normalized Ricci flow for any initial metric invariant under such an action, where $d>1$ is a non-prime integer.

We also compute the $G$-Yamabe invariants of some 4-manifolds with $G$-monopole classes and the oribifold Yamabe invariants of some 4-orbifolds.
\end{abstract}
\maketitle
\setcounter{section}{0}
\setcounter{equation}{0}

\section{Introduction}
This article is a continuation of our previous paper \cite{sung5} with a view to geometric applications.
On a smooth closed oriented 4-manifold $M$, an element of $H^2(M;\Bbb Z)$ is called  a \emph{monopole class} if it arises as the first Chern class of a Spin$^c$ structure of $M$ for which the Seiberg-Witten equations
$$\left\{
\begin{array}{ll} D_A\Phi=0\\
  F_{A}^+=\Phi\otimes\Phi^*-\frac{|\Phi|^2}{2}\textrm{Id},
\end{array}\right.
$$
admit a solution $(A,\Phi)$ for every choice of a Riemannian metric on $M$. It is well-known that a monopole class  gives a lower bound of the $L^2$-norm of various curvatures for any Riemannian metric, and hence a necessary condition for the existence of an Einstein metric and the Yamabe invariant of the manifold can be obtained.

In order to detect a monopole class, one needs to compute Seiberg-Witten invariants gotten by the intersection theory on the moduli space of solutions of the Seiberg-Witten equations or further refined Bauer-Furuta invariant given by the stably-framed bordism class of the moduli space, which is equivalent to the homotopy class of the Seiberg-Witten equations as a map between configuration spaces with Sobolev norms. But in many important cases, those invariants are difficult to compute or get trivial.

In the meantime, sometimes we need a solution of the
Seiberg-Witten equations for a specific metric rather than any
Riemannian metric. In our previous paper \cite{sung5}, we considered the case when a 4-manifold $M$ and its Spin$^c$ structure $\frak{s}$ admit a smooth action by a compact Lie group $G$, and defined a $G$-\emph{monopole class} as an element of $H^2(M;\Bbb Z)$ which is the first Chern class of a $G$-equivariant Spin$^c$ structure for which the Seiberg-Witten equations admit a $G$-invariant solution for every $G$-invariant Riemannian metric of $M$.

In order to detect a $G$-monopole class, we need to compute $G$-monopole invariants obtained by the intersection theory on the moduli spaces of $G$-invariant solutions of the Seiberg-Witten equations, and $G$-Bauer-Furuta invariant given by the homotopy class of the Seiberg-Witten map between the subspaces of $G$-invariant configurations. We respectively denote the $G$-monopole invariant and the $G$-Bauer-Furuta invariant of $(M,\frak{s})$ by  $SW^{G}_{M,\frak{s}}$ and  $BF^{G}_{M,\frak{s}}$. If $G=\{e\}$, then they are just the ordinary invariants $SW_{M,\frak{s}}$ and  $BF_{M,\frak{s}}$.

In fact, $G$-monopole classes we have in mind in this paper are the cases when $G$ is finite. Suppose that a compact connected Lie group $G$ of nonzero dimension acts effectively on a smooth closed manifold $M$. If $G$ is not a torus $T^k$, then $G$ contains a Lie subgroup isomorphic to $S^3$ or $S^3/\Bbb Z_2$, and hence $M$ admits a $G$-invariant metric of positive scalar curvature by the well-known Lawson-Yau theorem \cite{law-yau}. (In its original form, the theorem only states that $M$ carries a metric of positive scalar curvature, but one can check that their method can yield a $G$-invariant such metric.) If this is the case for a 4-manifold $M$ with $b_2^G(M)>1$, then $M$ usually has no $G$-monopole class.

On the other hand, in case of a torus action, it is reduced to an $S^1$ action. The Seiberg-Witten invariants of a 4-manifold with an effective $S^1$ action were extensively studied by S. Baldridge \cite{bal1, bal2, bal3}. He showed that if the action has fixed points, the Seiberg-Witten invariants vanish for all Spin$^c$ structures, and if the action is fixed-point free, then the Seiberg-Witten invariants can be read from the those of the quotient 3-orbifold.

In our previous paper, some nontrivial examples of $G$-monopole classes for a finite cyclic group $G$ were shown :
\begin{thm}\cite{sung5}\label{firstth}
Let $M$ and $N$ be  smooth closed oriented 4-manifolds satisfying
$b_2^+(M)> 1$ and $b_2^+(N)=0$, and $\bar{M}_k$ for any $k\geq 2$ be the connected sum
$M\#\cdots \#M\# N$ where there are $k$ summands of $M$.

Suppose that $N$ admits a smooth
orientation-preserving $\Bbb Z_k$-action with at least one free
orbit  such that there exist a $\Bbb Z_k$-invariant Riemannian metric of positive scalar curvature and a $\Bbb Z_k$-equivariant Spin$^c$ structure $\frak{s}_N$ with $c_1^2(\frak{s}_N)=-b_2(N)$.

Define a $\Bbb Z_k$-action on $\bar{M}_k$ induced from that of $N$
and the cyclic permutation of  the $k$ summands of $M$ glued along
a free orbit in $N$, and let $\bar{\frak{s}}$ be the
Spin$^c$ structure on $\bar{M}_k$ obtained by gluing $\frak{s}_N$ and a Spin$^c$
structure $\frak{s}$ of $M$.

Then for any $\Bbb Z_k$-action on $\bar{\frak{s}}$ covering the above
$\Bbb Z_k$-action on $\bar{M}_k$, $SW^{\Bbb
Z_k}_{\bar{M}_k,\bar{\frak{s}}}$ mod 2 is nontrivial if
$SW_{M,\frak{s}}$ mod 2 is nontrivial, and also $BF^{\Bbb
Z_k}_{\bar{M}_k,\bar{\frak{s}}}$ is nontrivial, if
$BF_{M,\frak{s}}$ is nontrivial.
\end{thm}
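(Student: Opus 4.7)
The plan is to run a $\mathbb{Z}_k$-equivariant version of the standard connected-sum gluing/neck-stretching argument for Seiberg--Witten invariants, exploiting the fact that the $\mathbb{Z}_k$ symmetry cyclically permutes the $k$ connect-sum necks joining the copies of $M$ to $N$, while the PSC assumption on $N$ kills all irreducible monopoles on the $N$-side.

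First I would choose a $\mathbb{Z}_k$-invariant metric $g_T$ on $\bar{M}_k$ with cylindrical necks of the form $S^3 \times [-T,T]$ at each of the $k$ gluing loci (a free $\mathbb{Z}_k$-orbit in $N$), using the given $\mathbb{Z}_k$-invariant PSC metric on $N$ and cyclically transporting a fixed Riemannian metric on $M$ across the $k$ copies. As $T\to\infty$, standard Seiberg--Witten compactness produces a limiting configuration supported on $(N \setminus \{k\text{ balls}\}, \mathfrak{s}_N)$ and on $k$ copies of $(M \setminus \text{ball}, \mathfrak{s})$. Because the metric on $N$ has positive scalar curvature and $c_1^2(\mathfrak{s}_N) = -b_2(N)$ saturates the Weitzenb\"ock estimate, the $N$-component must have vanishing spinor, i.e., must be the unique reducible monopole for $\mathfrak{s}_N$. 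Equivariance then forces the monopoles on the $k$ copies of $M$ to be related by cyclic permutation, so the $\mathbb{Z}_k$-invariant moduli space on $(\bar{M}_k,\bar{\mathfrak{s}})$ is identified with the ordinary moduli space on $(M,\mathfrak{s})$.

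For the mod $2$ invariant, this identification yields $SW^{\mathbb{Z}_k}_{\bar{M}_k,\bar{\mathfrak{s}}} \equiv SW_{M,\mathfrak{s}} \pmod 2$ without orientation worries. For the Bauer--Furuta version, I would realize the Seiberg--Witten map as a $\mathbb{Z}_k$-equivariant map between finite-dimensional approximations of the configuration spaces, then establish a connected-sum smash formula of the form
\[
BF^{\mathbb{Z}_k}_{\bar{M}_k,\bar{\mathfrak{s}}} \;\simeq\; BF^{\mathbb{Z}_k}_{N,\mathfrak{s}_N} \wedge \bigl(BF_{M,\mathfrak{s}}\bigr)^{\wedge k}
\]
in the appropriate $\mathbb{Z}_k$-equivariant stable category, with $\mathbb{Z}_k$ acting on the right-hand smash power by cyclic permutation of factors. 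The PSC and minimality conditions on $\mathfrak{s}_N$ force $BF^{\mathbb{Z}_k}_{N,\mathfrak{s}_N}$ to be an invertible equivariant map (essentially an identity of Thom spaces of $\mathbb{Z}_k$-representations). Nontriviality of the smash power $(BF_{M,\mathfrak{s}})^{\wedge k}$ as an equivariant map follows by applying the geometric-fixed-point functor, whose value on a cyclic smash power $X^{\wedge k}$ recovers $X$, so nontriviality of $BF_{M,\mathfrak{s}}$ propagates. Independence of the choice of lift of the $\mathbb{Z}_k$-action to $\bar{\mathfrak{s}}$ is automatic, since different lifts differ by a character $\mathbb{Z}_k \to U(1)$ which only twists the $\mathbb{Z}_k$-representation on the index bundle.

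The hardest part, I expect, will be setting up the equivariant finite-dimensional approximation compatibly with the $\mathbb{Z}_k$-action and rigorously establishing the smash formula above in the equivariant stable category, including tracking the equivariant indices and the representation-theoretic degree shifts appearing when passing to Thom spaces of $\mathbb{Z}_k$-representations. A secondary difficulty is ensuring that under the chosen PSC metric on $N$ no obstruction-bundle correction enters from the $N$-factor, so that the formula really does reduce the nontriviality of $BF^{\mathbb{Z}_k}_{\bar{M}_k,\bar{\mathfrak{s}}}$ cleanly to that of the smash power of $BF_{M,\mathfrak{s}}$.
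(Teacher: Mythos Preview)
The paper does not contain a proof of this theorem: it is quoted verbatim from the author's earlier paper \cite{sung5}, as indicated by the citation immediately after the theorem environment. There is therefore no proof in the present paper to compare your proposal against.

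That said, your sketch is a reasonable outline of how such a result is typically established, and the present paper does invoke the mod~$2$ congruence $SW^{\Bbb Z_k}_{\bar{M}_{k},\bar{\frak s}}\equiv SW_{M,\frak s}$ (citing \cite[Theorem~4.2]{sung5}) in exactly the form your neck-stretching argument would produce. One point to be careful about: the condition $c_1^2(\frak s_N)=-b_2(N)$ is not a ``saturation of the Weitzenb\"ock estimate'' per se; rather, since $b_2^+(N)=0$, it is the condition that the formal dimension of the moduli space for $\frak s_N$ is zero, so that under a PSC metric the unique reducible is a regular point and contributes no obstruction bundle. Your identification of the hardest part---the equivariant smash formula for Bauer--Furuta and the geometric-fixed-point argument---is also apt, and this is indeed where the substantive work in \cite{sung5} lies; the present paper simply imports the conclusion.
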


C. LeBrun and his collaborators \cite{LB3, LB4, IL1, IL2, IS, sung3, ioana} used monopole classes to derive topological obstructions to the existence of an Einstein metric, improving the Hitchin-Thorpe inequality \cite{hit, tho} $$2\chi(X)\pm 3\tau (X)\geq 0$$ which holds on any smooth closed oriented Einstein 4-manifold $X$.
More generally, as observed by Fang, Zhang, and Zhang \cite{FZZ}, these topological obstructions for Einstein metrics on 4-manifolds can be extended to the obstructions to the existence of a quasi-nonsingular solution of the normalized Ricci flow $$\frac{\partial g}{\partial t}=-2 Ric_{g} +\frac{2\bar{s}_g}{n}g,$$
where $\bar{s}_g$ is the average scalar curvature $\frac{\int_M s_g d\mu_g}{\int_M d\mu_g}$ of $g(t)$, and $n$ is the dimension of the manifold.
Following M. Ishida \cite{Ishida}, we say that a smooth solution $\{g(t)|t\in [0,T)\}$ to the normalized Ricci flow  is called \emph{quasi-nonsingular} if $$T=\infty,\ \ \ \textrm{and}\ \ \ \sup_{t\in [0,\infty)} |s_{g(t)}| < \infty.$$

The purpose of this paper is to use $G$-monopole classes to
derive a lower bound of the $L^2$-norm of the Ricci curvature of a $G$-invariant Riemannian metric, even when there is no monopole class, and give a new topological obstruction to the existence of a quasi-nonsingular solution of the normalized Ricci flow for any $G$-invariant initial metric. In particular, this implies the nonexistence of a $G$-invariant Einstein metric or an orbifold Einstein metric on its quotient orbifold.

For instance, we show that a certain connected sum $m\Bbb CP_2 \# n\overline{\Bbb CP}_2$ admits an infinite family of topologically equivalent but smoothly distinct non-free actions of $\Bbb Z_k\oplus H$, where $k\geq 2$ is any integer, and $H$ is any nontrivial finite group acting freely on $S^3$ such that it does not admit a quasi-nonsingular solution of the normalized Ricci flow for any initial metric invariant under such an action.
These examples are different from those of M. Ishida and I. \c{S}uvaina \cite{IS, ioana} who showed that certain connected sums $$m\Bbb CP_2\#n\overline{\Bbb CP}_2\ \ \ \textrm{and}\ \ \ m(S^2\times S^2)\# nK3$$ admit infinitely many free actions of a finite cyclic group and no Einstein metrics invariant under such an action. (In the first examples, also no nonsingular solutions to the normalized Ricci flow invariant under such an action.) Their actions are free so that one can pass to their quotient manifolds and apply ordinary Seiberg-Witten invariants to show the non-existence of Einstein metrics and nonsingular solutions to the normalized Ricci flow on them.

We also apply the above theorem to compute the $\Bbb Z_k$-Yamabe invariant of
$\bar{M}_k$, which is roughly the $\Bbb Z_k$-equivariant version of the Yamabe invariant constructed
using only metrics invariant under a $\Bbb Z_k$ action, and the orbifold Yamabe invariant of $\bar{M}_k/\Bbb Z_k=M\# N/\Bbb Z_k$.

When the $G$ action is finite with isolated fixed points, one may try to find a $G$-monopole class by searching for an ordinary monopole class on the quotient 4-orbifold. But the Seiberg-Witten theory on a 4-orbifold is not fully developed yet, and the readers are referred to \cite{chen1, chen2}.

\section{Ricci flow and $G$-monopole class}
In this section, $G$ denotes a compact Lie group.
\begin{thm}\label{main}
Let $X$ be a smooth closed oriented $4$-manifold with a smooth
$G$-action. Suppose that $c_1(\frak{s})$ is a $G$-monopole class on $X$.
Then for any $G$-invariant Riemannian metric $g$ on $X$,
$$\frac{1}{4\pi^2}\int_X (\frac{s_g^2}{24}+2|W_g^+|^2)\ d\mu_g\geq \frac{2}{3}(c_1^+(\frak{s}))^2,$$ and
$$\frac{1}{8\pi^2}\int_X |Ric_g|^2 d\mu_g\geq 2(c_1^+(\frak{s}))^2-(2\chi(X)+3\tau(X)),$$
where $s_g, W_g^+$, and $Ric_g$  are respectively the scalar
curvature, self-dual Weyl curvature, and Ricci curvature of $g$,
and $c_1^+$, $\chi$, and $\tau$ respectively denote the self-dual harmonic part of $c_1$ with respect to $g$, Euler characteristic, and signature.
\end{thm}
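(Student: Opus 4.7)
The plan is that the $G$-equivariant hypothesis enters only as an existence statement: by definition of a $G$-monopole class, for the given $G$-invariant metric $g$ on $X$ there exists a $G$-invariant pair $(A,\Phi)$ solving the Seiberg-Witten equations. The class $c_1^+(\frak{s})$ is the $g$-harmonic self-dual projection of $\tfrac{i}{2\pi}[F_A]$, well-defined since $g$ is $G$-invariant, and no further use of the $G$-symmetry is needed. The curvature bounds thus reduce to the classical LeBrun-type estimates for any solution of the Seiberg-Witten equations on $(X,g)$.

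For the first inequality, I would combine two Weitzenb\"ock computations. The Dirac Weitzenb\"ock
$$\nabla^*\nabla\Phi+\tfrac{s_g}{4}\Phi+\tfrac{1}{2}F_A^+\!\cdot\!\Phi=0,$$
together with the curvature equation $F_A^+\!\cdot\!\Phi=\tfrac{|\Phi|^2}{2}\Phi$, gives after pairing with $\Phi$ and integrating
$$\int_X\!\left(|\nabla\Phi|^2+\tfrac{s_g}{4}|\Phi|^2+\tfrac{|\Phi|^4}{4}\right)d\mu_g=0,$$
which via $|F_A^+|^2=\tfrac{|\Phi|^4}{8}$ and the Hodge inequality $\int_X|F_A^+|^2\,d\mu_g\geq 4\pi^2(c_1^+(\frak{s}))^2$ yields the basic scalar estimate $\int_X s_g^2\,d\mu_g\geq 32\pi^2(c_1^+(\frak{s}))^2$ through Cauchy-Schwarz. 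For the refinement involving $|W_g^+|^2$ I would apply the Bochner-Weitzenb\"ock
$$(d+d^*)^2\omega=\nabla^*\nabla\omega-2W^+(\omega)+\tfrac{s_g}{3}\omega$$
to the self-dual $2$-form $\omega=F_A^+=\sigma(\Phi)$. Because $\sigma(\Phi)$ has at most one nonzero eigenvalue as a symmetric endomorphism of $\Lambda^+$, the pointwise spectral bound $\langle W^+(\omega),\omega\rangle\leq\sqrt{2/3}\,|W^+|\,|\omega|^2$ holds, and combining this with the Dirac Weitzenb\"ock above produces the sharp constant $\tfrac{2}{3}$ in the first stated inequality.

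For the Ricci estimate I would invoke the Gauss-Bonnet and Hirzebruch signature formulas, which together give
$$4\pi^2\bigl(2\chi(X)+3\tau(X)\bigr)=\int_X\!\left(\tfrac{s_g^2}{24}+2|W_g^+|^2-\tfrac{|\mathring r_g|^2}{2}\right)d\mu_g,$$
with $\mathring r_g=Ric_g-\tfrac{s_g}{4}g$ the traceless Ricci. Substituting into $|Ric_g|^2=|\mathring r_g|^2+\tfrac{s_g^2}{4}$ and inserting the basic scalar estimate together with the refined scalar-Weyl estimate in an appropriate linear combination then yields the second stated inequality. The analytical crux is the spectral refinement for $W^+$ acting on the rank-one self-dual form $\sigma(\Phi)$, together with the coefficient-matching needed to produce the sharp factor of $2$ in the Ricci bound (which is saturated by K\"ahler-Einstein surfaces of negative scalar curvature). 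The equivariant aspect, by contrast, is handled for free once one invokes the definition of $G$-monopole class.
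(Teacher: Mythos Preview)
Your proposal has a genuine gap that goes to the heart of what makes this theorem nontrivial. You assert that the $G$-equivariance ``enters only as an existence statement'' for a solution at the given metric $g$, and that the Weitzenb\"ock identities applied directly at $g$ then yield both sharp inequalities. This is not how LeBrun's argument in \cite{LB4} works, and the direct route you sketch does not produce the stated constants.

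The combination of the Dirac Weitzenb\"ock and the self-dual Bochner formula at the metric $g$ gives, after Cauchy--Schwarz, an estimate of the form
\[
\int_X \bigl(s_g-\sqrt{6}\,|W_g^+|\bigr)^2\,d\mu_g \;\geq\; C\,(c_1^+(\frak{s}))^2,
\]
but the left-hand side contains the cross term $-2\sqrt{6}\int s_g|W_g^+|$, and there is no pointwise or integral inequality that converts this into the quantity $\int(\tfrac{s_g^2}{24}+2|W_g^+|^2)$ with the sharp coefficient $\tfrac{2}{3}$. LeBrun's device is to pass to the conformal representative $\hat g\in[g]$ for which the \emph{modified scalar curvature} $s-\sqrt{6}|W^+|$ is constant (the minimizer of the modified Yamabe functional $\frak{Y}$), apply the Seiberg--Witten estimate to $\hat g$, and then exploit the minimizing property and conformal invariance of $\int|W^+|^2\,d\mu$ to obtain the inequality for $g$. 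The same mechanism, with a different modified scalar curvature, is needed for the Ricci estimate; it does not drop out of a ``linear combination'' of the basic scalar bound and Gauss--Bonnet alone.

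The crucial point is that this argument requires a Seiberg--Witten solution for $\hat g$, not for $g$. Since the hypothesis is only that $c_1(\frak s)$ is a $G$-monopole class, solutions are guaranteed only for $G$-invariant metrics. The paper's actual contribution is precisely to show that $\hat g$ is $G$-invariant: because the minimizer of $\frak{Y}$ on $[g]$ has nonpositive constant modified scalar curvature, it is unique up to homothety, and since $G$ preserves $[g]$ and the volume, it must fix $\hat g$. Your outline omits this step entirely; without it the $G$-monopole hypothesis does not let you invoke the Seiberg--Witten equations at the metric where LeBrun's argument needs them.
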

\begin{proof}
As usual, we denote the conformal class of $g$ by $[g]$. The proof is done using LeBrun's Bochner-type argument in the same way as the case of $G=\{1\}$ in \cite{LB4},
 where one needed $c_1(\frak{s})$ to be a monopole class in order to guarantee that a metric
 $\hat{g}\in [g]$ with constant ``\emph{modified scalar curvature}" $s-\sqrt{6}|W^+|$ admits a solution of the Seiberg-Witten equations for $\frak{s}$.

Here $c_1(\frak{s})$ is assumed to be a $G$-monopole class, and hence it's enough to prove that $\hat{g}$ is also $G$-invariant.
Noting that  $\hat{g}$ is a minimizer for the
Yamabe-type functional $$\frak{Y}(\tilde{g}):=\frac{\int_X(s_{\tilde{g}}-\sqrt{6}|W^+_{\tilde{g}}|)\
d\mu_{\tilde{g}}}{(\textrm{Vol}_{\tilde{g}})^{\frac{1}{2}}}$$ defined on $[g]$ of $g$, and the modified scalar curvature of $\hat{g}$ is nonpositive, $\hat{g}$ is unique up to constant
multiplication,  as shown in \cite{sung4}. Since $g$ is invariant under the $G$-action, $\hat{g}$ is
pulled-back under the $G$-action only to a constant multiple of $\hat{g}$, which should
be $\hat{g}$ itself, because the total volume remains unchanged
under the group action.
\end{proof}

\begin{cor}\label{main1}
Let $X$ be a smooth closed oriented $4$-manifold with a smooth
$G$-action. Suppose that $c_1(\frak{s})$ is a $G$-monopole class on $X$, and $X$ admits a $G$-invariant Einstein metric $g$. Then $$2\chi(X)+3\tau (X)\geq \frac{2}{3}(c_1^{+}(\frak{s}))^2\geq \frac{2}{3}c_1^2(\frak{s}).$$
\end{cor}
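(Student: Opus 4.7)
The plan is to combine Theorem 2.1 with the Gauss-Bonnet and Hirzebruch signature formulas, exploiting the Einstein condition to kill the traceless Ricci term.

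First I would write down the curvature integrands appearing in the two topological invariants on any closed oriented Riemannian 4-manifold:
\begin{align*}
\chi(X) &= \frac{1}{8\pi^2}\int_X\left(\frac{s_g^2}{24}+|W_g^+|^2+|W_g^-|^2-\frac{|\mathring{Ric}_g|^2}{2}\right)d\mu_g,\\
\tau(X) &= \frac{1}{12\pi^2}\int_X\bigl(|W_g^+|^2-|W_g^-|^2\bigr)d\mu_g,
\end{align*}
where $\mathring{Ric}_g=Ric_g-\frac{s_g}{4}g$ is the trace-free Ricci tensor. Taking the combination $2\chi(X)+3\tau(X)$ cancels the $|W^-|^2$ contributions and gathers $|W^+|^2$ with coefficient $2$, yielding
$$2\chi(X)+3\tau(X)=\frac{1}{4\pi^2}\int_X\left(\frac{s_g^2}{24}+2|W_g^+|^2-\frac{|\mathring{Ric}_g|^2}{2}\right)d\mu_g.$$

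Next I would invoke the hypothesis that $g$ is Einstein, so $\mathring{Ric}_g\equiv 0$, and the expression collapses to
$$2\chi(X)+3\tau(X)=\frac{1}{4\pi^2}\int_X\left(\frac{s_g^2}{24}+2|W_g^+|^2\right)d\mu_g.$$
Since $g$ is $G$-invariant and $c_1(\mathfrak{s})$ is a $G$-monopole class, the first estimate in Theorem~\ref{main} applies and bounds the right-hand side below by $\frac{2}{3}(c_1^+(\mathfrak{s}))^2$, giving the first inequality of the corollary.

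Finally, for the second inequality I would use the fact that on any closed oriented Riemannian 4-manifold the self-dual/anti-self-dual Hodge decomposition of the harmonic representative of $c_1(\mathfrak{s})$ is $L^2$-orthogonal, so
$$c_1^2(\mathfrak{s})=(c_1^+(\mathfrak{s}))^2+(c_1^-(\mathfrak{s}))^2,$$
and since anti-self-dual harmonic 2-forms satisfy $\alpha\wedge\alpha=-|\alpha|^2\,d\mu_g$, the term $(c_1^-(\mathfrak{s}))^2$ is non-positive. Hence $(c_1^+(\mathfrak{s}))^2\geq c_1^2(\mathfrak{s})$, which gives the second inequality.

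There is no real obstacle here; the only step that requires some care is keeping the signs and coefficients straight when combining Gauss--Bonnet and Hirzebruch in the $2\chi+3\tau$ identity, and checking that the $(c_1^-)^2\le 0$ inequality (not equality) is what is needed to get the last line.
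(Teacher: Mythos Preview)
Your proof is correct and follows essentially the same approach as the paper: combine the Chern--Gauss--Bonnet and signature formulas into the $2\chi+3\tau$ identity, use the Einstein condition to kill the trace-free Ricci term, apply Theorem~\ref{main}, and then use the self-dual/anti-self-dual decomposition of the harmonic representative of $c_1(\mathfrak{s})$ for the last inequality. The only difference is that you spell out the two topological formulas separately before combining them, whereas the paper writes the combined identity directly.
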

\begin{proof}
Using the Chern-Gauss-Bonnet theorem and the fact that the trace-free part $\stackrel{\circ}r_{g}$ of $Ric_g$ is zero,
\begin{eqnarray*}
2\chi(X)+3\tau (X)&=& \frac{1}{4\pi^2}\int_X (\frac{s_{g}^2}{24}+2|W_{g}^+|^2-\frac{|\stackrel{\circ}r_{g}|^2}{2})\ d\mu_{g}\\ &=& \frac{1}{4\pi^2}\int_X (\frac{s_{g}^2}{24}+2|W_{g}^+|^2)\ d\mu_{g}\\ &\geq& \frac{2}{3}(c_1^{+}(\frak{s}))^2\\ &\geq& \frac{2}{3}c_1^{2}(\frak{s}),
\end{eqnarray*}
where the first inequality is due to Theorem \ref{main}, and the second one obviously comes from that any 2-form $\alpha$ on $X$ has an orthogonal decomposition $\alpha^++\alpha^-$ into self-dual and anti-self-dual forms so that $$\int_X\alpha\wedge\alpha=\int_X(\alpha^+\wedge\alpha^++\alpha^-\wedge\alpha^-)=||\alpha^+||_{L^2}^2-||\alpha^-||_{L^2}^2.$$
\end{proof}

\begin{thm}\label{main2}
Let $X$ be a smooth closed oriented $4$-manifold with a smooth
$G$-action. Suppose that $c_1(\frak{s})$ is a $G$-monopole class on $X$, and $X$ admits a quasi-nonsingular solution $\{g(t)|t\geq 0\}$ of the normalized Ricci flow for a $G$-invariant initial metric such that
\begin{eqnarray*}
\liminf_{t\rightarrow \infty}(c_1^{+_t}(\frak{s}))^2>0.
\end{eqnarray*}
Then $$2\chi(X)+3\tau (X)\geq \frac{2}{3}\liminf_{t\rightarrow \infty}(c_1^{+_t}(\frak{s}))^2\geq \frac{2}{3}c_1^2(\frak{s}),$$ where $c_1^{+_t}$ is the self-dual harmonic part of $c_1$ with respect to $g(t)$.
\end{thm}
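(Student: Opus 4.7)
The plan is to apply Theorem \ref{main} at each time $t$ along the flow and combine this with the standard dimension-four Ricci-flow argument of Fang-Zhang-Zhang and Ishida \cite{FZZ, Ishida}, the single new ingredient being that $G$-invariance is preserved by the normalized Ricci flow.

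First I would observe that $G$-invariance is preserved by the flow: since the normalized Ricci flow has a unique smooth solution on the closed manifold $X$ for prescribed initial data and since the equation is diffeomorphism-equivariant, for any $\phi\in G$ the metric $\phi^{\ast}g(t)$ is again a solution with the same initial value $\phi^{\ast}g(0)=g(0)$, forcing $\phi^{\ast}g(t)=g(t)$ for all $t\geq 0$. Consequently each $g(t)$ is $G$-invariant and the first estimate of Theorem \ref{main} applies at every $t$. Combined with the Chern-Gauss-Bonnet identity
$$2\chi(X)+3\tau(X)=\frac{1}{4\pi^2}\int_X\!\left(\frac{s^2_{g(t)}}{24}+2|W^+_{g(t)}|^2-\tfrac{1}{2}|\stackrel{\circ}{r}_{g(t)}|^2\right)d\mu_{g(t)},$$
this yields the pointwise-in-$t$ inequality
$$2\chi(X)+3\tau(X)+\frac{1}{8\pi^2}\int_X|\stackrel{\circ}{r}_{g(t)}|^2\,d\mu_{g(t)} \geq \tfrac{2}{3}(c_1^{+_t}(\frak{s}))^2.$$

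The main step is then to extract a sequence $t_k\to\infty$ along which $\int_X|\stackrel{\circ}{r}_{g(t_k)}|^2 d\mu_{g(t_k)}\to 0$. This is precisely the input provided by the quasi-nonsingular hypothesis via the Perelman-type monotonicity arguments of \cite{FZZ, Ishida}: the volume is preserved by the normalized flow, the scalar curvature is uniformly bounded by assumption, and Perelman's $\bar{\lambda}$-functional is monotone nondecreasing along the flow and bounded above in terms of Yamabe-type data, which forces the $L^2$-norm of the traceless Ricci tensor to fail to be bounded away from zero as $t\to\infty$. Along such a sequence $t_k$, the pointwise inequality above gives
$$2\chi(X)+3\tau(X) \geq \tfrac{2}{3}\liminf_{k\to\infty}(c_1^{+_{t_k}}(\frak{s}))^2 \geq \tfrac{2}{3}\liminf_{t\to\infty}(c_1^{+_t}(\frak{s}))^2,$$
the last step being the elementary fact that the $\liminf$ along any subsequence dominates the $\liminf$ of the full family.

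The remaining inequality $\liminf_t (c_1^{+_t}(\frak{s}))^2 \geq c_1^2(\frak{s})$ is immediate from the decomposition $\int_X c_1\wedge c_1=\|c_1^{+_t}\|_{L^2}^2-\|c_1^{-_t}\|_{L^2}^2$ already used in the proof of Corollary \ref{main1}, applied at each $t$ and taking $\liminf$. The main obstacle is not this book-keeping but the subsequence-extraction step: it is the only place where the dynamical behaviour of the flow enters, and it leans on the full Perelman/Fang-Zhang-Zhang analysis of quasi-nonsingular solutions in dimension four. Once that input is available, the rest of the argument is a routine adaptation of the $G=\{1\}$ case, with Theorem \ref{main} replacing the classical LeBrun-type bound.
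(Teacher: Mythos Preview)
Your overall strategy is the right one, and the subsequence extraction you propose (rather than the paper's time-averaging over $[m,m+1]$) would work once the Fang--Zhang--Zhang input is in place. However, there is a genuine gap at exactly the step you flag as the main obstacle.

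You assert that the quasi-nonsingular hypothesis alone, via Perelman's $\bar{\lambda}$-monotonicity, forces $\int_X|\stackrel{\circ}{r}_{g(t)}|^2\,d\mu_{g(t)}$ to fail to be bounded away from zero. But Lemma~3.1 of \cite{FZZ}, which is what the paper actually invokes, requires the additional condition that $\min_X s_{g(t)}<-c$ for some fixed $c>0$; bounded scalar curvature by itself is not enough. Moreover, the monotonicity of $\bar{\lambda}$ controls the weighted quantity $\int_X|Ric+\nabla^2\phi-\tfrac{1}{n}(s+\Delta\phi)g|^2e^{-\phi}\,d\mu$ for the minimising potential $\phi$, not $\int_X|\stackrel{\circ}{r}|^2$ directly, and bridging that gap is precisely where extra hypotheses enter.

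The paper closes this gap by using the hypothesis $\liminf_{t\to\infty}(c_1^{+_t}(\frak{s}))^2>0$ in a place you never use it: since each $g(t)$ is $G$-invariant and $c_1(\frak{s})$ is a $G$-monopole class, there is a Seiberg--Witten solution $(A_t,\Phi_t)$ for every $t$, and the $C^0$ estimate $|\Phi_t|^2\leq\max(-\min_X s_{g(t)},0)$ then gives
\[
(c_1^{+_t}(\frak{s}))^2\leq \frac{1}{32\pi^2}\max(-\min_X s_{g(t)},0)^2\,\mathrm{Vol}_{g(0)}.
\]
If $\min_X s_{g(t)}$ were not bounded above by a negative constant, one could find times $t_\epsilon$ with $(c_1^{+_{t_\epsilon}}(\frak{s}))^2$ arbitrarily small, contradicting the $\liminf>0$ assumption. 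This is how the negativity condition needed for \cite[Lemma~3.1]{FZZ} is obtained; without it your subsequence extraction is unjustified. (The paper's Remark after the theorem confirms that the $\liminf>0$ hypothesis is used \emph{only} for this step.)
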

\begin{proof}
We claim that there exists a constant $c>0$ such that
\begin{eqnarray}\label{kurak}
\breve{s}_{g(t)}:=\min_{x\in X}s_{g(t)}(x)< -c.
\end{eqnarray}
Suppose not. Then for any $\epsilon>0$, there exists $t_{\epsilon}>0$ such that $$\breve{s}_{g(t_{\epsilon})}\geq -\epsilon .$$ Note that $g(t)$ for any $t$ is also $G$-invariant by the uniqueness of the Ricci flow. Thus there exists a solution $(A_t, \Phi_t)$ of the Seiberg-Witten equations for $(X,\frak{s})$ with respect to $g(t)$. Thus
\begin{eqnarray*}
(c_1^{+_t}(\frak{s}))^2&\leq& \frac{1}{4\pi^2}\int_X |F_{A_t}^{+_t}|^2 d\mu_{g(t)}\\ &=& \frac{1}{4\pi^2}\int_X |\Phi_t\otimes\Phi_t^*-\frac{|\Phi_t|^2}{2}\textrm{Id}|^2 d\mu_{g(t)}\\ &=& \frac{1}{4\pi^2}\int_X \frac{|\Phi_t|^4}{8} d\mu_{g(t)}.
\end{eqnarray*}
By the well-known Weitzenb\"{o}ck argument, $$|\Phi_t|\leq \max \{-\breve{s}_{g(t)},0 \},$$ and hence $|\Phi_{t_{\epsilon}}|\leq \epsilon.$
Therefore $$(c_1^{+_{t_\epsilon}}(\frak{s}))^2\leq \frac{1}{4\pi^2}\int_X\frac{\epsilon^4}{8} d\mu_{g(t)}=\frac{\epsilon^4}{32\pi^2}\int_Xd\mu_{g(0)},$$ because the normalized Ricci flow preserves the volume. Since $\epsilon>0$ is arbitrary, this yields a contradiction to $\liminf_{t\rightarrow \infty}(c_1^{+_t}(\frak{s}))^2>0.$

By Lemma 3.1 of \cite{FZZ}, any quasi-nonsingular solution satisfying (\ref{kurak}) on a closed manifold must have that
\begin{eqnarray}\label{allsave}
\int_0^\infty \int_X |\stackrel{\circ}r_{g(t)}|^2d\mu_{g(t)}< \infty.
\end{eqnarray}
Then by the Chern-Gauss-Bonnet theorem combined with Theorem \ref{main},
\begin{eqnarray*}
2\chi(X)+3\tau (X)&=& \int_{m}^{m+1}(2\chi(X)+3\tau (X))\ dt\\ &=& \frac{1}{4\pi^2}\int_{m}^{m+1}\int_X (\frac{s_{g(t)}^2}{24}+2|W_{g(t)}^+|^2-\frac{|\stackrel{\circ}r_{g(t)}|^2}{2})\ d\mu_{g(t)}dt\\ &\geq& \liminf_{m\rightarrow\infty}\frac{1}{4\pi^2}\int_{m}^{m+1}\int_X (\frac{s_{g(t)}^2}{24}+2|W_{g(t)}^+|^2-\frac{|\stackrel{\circ}r_{g(t)}|^2}{2})\ d\mu_{g(t)}dt\\ &\geq& \liminf_{m\rightarrow \infty}\frac{1}{4\pi^2}\int_{m}^{m+1}\int_X (\frac{s_{g(t)}^2}{24}+2|W_{g(t)}^+|^2)\ d\mu_{g(t)}dt\\ &\geq& \liminf_{m\rightarrow \infty}\int_m^{m+1}\frac{2}{3}(c_1^{+_t}(\frak{s}))^2dt\\ &=& \liminf_{m\rightarrow \infty}\int_0^{1}\frac{2}{3}(c_1^{+_{m+t}}(\frak{s}))^2dt\\ &\geq& \int_0^{1}\frac{2}{3}\liminf_{m\rightarrow \infty}(c_1^{+_{m+t}}(\frak{s}))^2dt\\&\geq& \frac{2}{3}\liminf_{t\rightarrow \infty}(c_1^{+_t}(\frak{s}))^2,
\end{eqnarray*}
where the 2nd inequality from the last is due to Fatou's lemma.
\end{proof}

\begin{rmk}
The assumption that $\liminf_{t\rightarrow \infty}(c_1^{+_t}(\frak{s}))^2>0$ was needed only to get (\ref{kurak}), and so it can be replaced by the condition $$Y_G(X)<0$$ on the $G$-Yamabe invariant of $X$, which will be introduced in the following section.
\end{rmk}

We now produce some non-existence examples of $G$-invariant Einstein metrics or more generally quasi-nonsingular solutions of the normalized Ricci flow, where the Hitchin-Thorpe inequality is satisfied while there may not exist any monopole class.
\begin{thm}\label{grace}
Let $M$, $N$, and $\bar{M}_k$ be as in Theorem \ref{firstth}.  Suppose that $M$ has  nonzero mod 2 Seiberg-Witten invariant
for a Spin$^c$ structure $\frak{s}$, and $$0<2\chi(M)+3\tau (M)< \frac{1}{k}(12(k-1)+12b_1(N)+3b_2(N)).$$ Then $\bar{M}_k$ does not admit a quasi-nonsingular solution to the normalized Ricci flow for any $\Bbb Z_k$-invariant initial metric. In particular, $\bar{M}_k$ never admits a $\Bbb Z_k$-invariant Einstein metric, and $M\# N/{\Bbb Z_k}$ never admits an orbifold Einstein metric.
\end{thm}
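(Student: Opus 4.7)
The plan is a proof by contradiction. Using Theorem \ref{firstth} I upgrade $\bar{\frak{s}}$ on $\bar M_k$ to a $\mathbb{Z}_k$-monopole class, apply Theorem \ref{main2} to a hypothetical quasi-nonsingular normalized Ricci flow solution, and match the resulting curvature inequality against the purely topological additivity formulas for $\chi$, $\tau$, and $c_1^2$ under connected sum to produce a numerical contradiction with the upper bound on $T := 2\chi(M) + 3\tau(M)$.

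Concretely, since $SW_{M,\frak{s}}$ is nontrivial mod $2$ and the hypotheses of Theorem \ref{firstth} are all in place, the glued $\mathbb{Z}_k$-equivariant Spin$^c$ structure $\bar{\frak{s}}$ on $\bar M_k$ has $SW^{\mathbb{Z}_k}_{\bar M_k,\bar{\frak{s}}}\not\equiv 0$ mod $2$, so $c_1(\bar{\frak{s}})$ is a $\mathbb{Z}_k$-monopole class. Additivity of $c_1^2$ over connected sums gives $c_1^2(\bar{\frak{s}}) = k\,c_1^2(\frak{s}) + c_1^2(\frak{s}_N) = k\,c_1^2(\frak{s}) - b_2(N)$, while nontriviality of $SW_{M,\frak{s}}$ forces $c_1^2(\frak{s}) \ge T$ from the Seiberg--Witten dimension formula. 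Now suppose, for contradiction, that $\bar M_k$ carries a $\mathbb{Z}_k$-invariant quasi-nonsingular solution $\{g(t)\}$ of the normalized Ricci flow. After verifying either $\liminf_{t\to\infty}(c_1^{+_t}(\bar{\frak{s}}))^2>0$ directly or, via the Remark following Theorem \ref{main2}, the negativity of $Y_{\mathbb{Z}_k}(\bar M_k)$ (which is supplied by the monopole-class obstruction when $c_1^2(\bar{\frak{s}})>0$), Theorem \ref{main2} yields
\[ 2\chi(\bar M_k) + 3\tau(\bar M_k) \;\ge\; \tfrac{2}{3}\,\liminf_{t\to\infty}\bigl(c_1^{+_t}(\bar{\frak{s}})\bigr)^2. \]

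Next, using $\chi(\bar M_k) = k\chi(M)+\chi(N)-2k$, $\tau(\bar M_k) = k\tau(M)+\tau(N)$, $\tau(N)=-b_2(N)$ (from $b_2^+(N)=0$), and $\chi(N)=2-2b_1(N)+b_2(N)$, the left-hand side unfolds to
\[ 2\chi(\bar M_k) + 3\tau(\bar M_k) \;=\; kT - 4(k-1) - 4b_1(N) - b_2(N). \]
Substituting into the Ricci-flow inequality with the lower bound $\liminf_{t\to\infty}(c_1^{+_t}(\bar{\frak{s}}))^2 \ge k\,c_1^2(\frak{s}) \ge kT$ and rearranging produces $kT \ge 12(k-1) + 12b_1(N) + 3b_2(N)$, directly contradicting the hypothesis. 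The corollaries drop out: a $\mathbb{Z}_k$-invariant Einstein metric is a stationary quasi-nonsingular solution of the normalized Ricci flow, and an orbifold Einstein metric on $(M\# N)/\mathbb{Z}_k$ pulls back to a $\mathbb{Z}_k$-invariant Einstein metric on $\bar M_k$, so both are excluded by the same obstruction.

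The main technical hurdle is the sharp lower bound $\liminf_{t\to\infty}(c_1^{+_t}(\bar{\frak{s}}))^2 \ge k\,c_1^2(\frak{s})$: the naive estimate $(c_1^{+_t})^2 \ge c_1^2(\bar{\frak{s}})=kc_1^2(\frak{s})-b_2(N)$ alone would produce only $b_2(N)$ rather than $3b_2(N)$ in the final inequality. To recover the extra $2b_2(N)$ one must exploit that the summand $c_1(\frak{s}_N)$ of $c_1(\bar{\frak{s}})$ lies in the topologically negative-definite subspace $H^2(N)\hookrightarrow H^2(\bar M_k)$ of rank $b_2(N)$, which forces the anti-self-dual part of $c_1(\bar{\frak{s}})$ to absorb at least $b_2(N)$ of negative norm regardless of metric, so that the self-dual part dominates $k\,c_1^2(\frak{s})$. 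Establishing this bound carefully—either pointwise using $\mathbb{Z}_k$-invariance of the Hodge decomposition or asymptotically along the flow via a neck-separation argument—is the crux of the proof.
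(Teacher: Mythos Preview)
Your overall framework is right and your arithmetic matches the paper's, and you have correctly isolated the decisive point: one needs the sharp bound
\[
\bigl(c_1^{+_t}(\bar{\frak{s}})\bigr)^2 \;\ge\; k\,c_1^2(\frak{s})
\]
for the (or some) $\Bbb Z_k$-monopole class at each time $t$, not merely the naive bound $\ge c_1^2(\bar{\frak{s}})=k\,c_1^2(\frak{s})-b_2(N)$.  The gap is in how you propose to obtain this.

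Your claim that ``the anti-self-dual part of $c_1(\bar{\frak{s}})$ absorbs at least $b_2(N)$ of negative norm regardless of metric'' is false.  Write $c_1(\bar{\frak{s}})=a+b$ with $a\in H^2(kM)$ and $b\in H^2(N)$; one needs $(a^++b^+)^2\ge a^2$, i.e.\ $2\,a^+\!\cdot b^+ +(b^+)^2+(a^-)^2\ge 0$, and the cross term $2\,a^+\!\cdot b^+$ can be arbitrarily negative.  Concretely, if a metric is chosen so that the harmonic representative of $a+b$ is itself self-dual (possible whenever $(a+b)^2>0$), then $(c_1^+)^2=a^2+b^2<a^2$.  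Neither the $\Bbb Z_k$-invariance of the Hodge decomposition nor any ``neck-separation'' asymptotics rules this out: both $a$ and $b$ are $\Bbb Z_k$-invariant classes, and nothing along the normalized Ricci flow forces the harmonic splitting to align with the topological splitting $H^2(kM)\oplus H^2(N)$.  (This also undermines your verification of the hypothesis $\liminf (c_1^{+_t})^2>0$ in Theorem~\ref{main2}, since $c_1^2(\bar{\frak{s}})=k\,c_1^2(\frak{s})-b_2(N)$ need not be positive.)

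The paper resolves this with a different idea you are missing: it produces a \emph{second} $\Bbb Z_k$-monopole class.  Observe that $-\frak{s}_N:=\frak{s}_N\otimes(-\det\frak{s}_N)$ is again $\Bbb Z_k$-equivariant with $c_1(-\frak{s}_N)=-c_1(\frak{s}_N)$ and $c_1^2=-b_2(N)$, so gluing it with $\frak{s}$ yields a second $\Bbb Z_k$-equivariant Spin$^c$ structure $\bar{\frak{s}}'$ with $c_1(\frak{s}_2')=-c_1(\frak{s}_2)$; by Theorem~\ref{firstth} it too is a $\Bbb Z_k$-monopole class.  For any metric the cross terms satisfy $c_1^+(\frak{s}_1)\cdot c_1^+(\frak{s}_2')=-\,c_1^+(\frak{s}_1)\cdot c_1^+(\frak{s}_2)$, so at least one of the two is nonnegative, and for that one
\[
\bigl(c_1^+(\bar{\frak{s}}^{(\prime)})\bigr)^2 \;\ge\; \bigl(c_1^+(\frak{s}_1)\bigr)^2 \;\ge\; c_1^2(\frak{s}_1)=k\,c_1^2(\frak{s})\;\ge\; k\,T.
\]
Hence $\max\bigl((c_1^{+_t}(\bar{\frak{s}}))^2,(c_1^{+_t}(\bar{\frak{s}}'))^2\bigr)\ge kT>0$ for every $t$, which both supplies the positivity needed for the $\breve{s}_{g(t)}<-c$ step and, fed into the Chern--Gauss--Bonnet/Theorem~\ref{main} argument, gives $2\chi(\bar M_k)+3\tau(\bar M_k)\ge \tfrac{2}{3}kT$.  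Combined with your (correct) additivity computation this yields the contradiction.  So the missing ingredient is not a Hodge-theoretic estimate for a single class but the use of the \emph{pair} of conjugate monopole classes to kill the cross term.
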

\begin{proof}
First note that $\bar{M}_k$ admits a $\Bbb Z_k$-invariant Einstein metric iff $\bar{M}_k/{\Bbb Z_k} = M\# N/{\Bbb Z_k}$ admits an orbifold Einstein metric, when $N/{\Bbb Z_k}$ is an orbifold. Because an Einstein metric is a static solution of the normalized Ricci flow, we will prove only the first statement.

Think of $\bar{M}_k$ as the connected sum $kM \# N$, and let $\frak{s}_1$ and $\frak{s}_2$ be the restriction of $\bar{\frak{s}}$ to $kM-B^4$ and $N-B^4$ respectively,
where $B^4$ is a small open ball for the connected sum operation. Then $$c_1(\bar{\frak{s}})=c_1(\frak{s}_1)+c_1(\frak{s}_2)\in H^2(kM-B^4)\oplus H^2(N-B^4)=H^2(\bar{M}_k),$$ and with respect to any Riemannian metric on $\bar{M}_k$
\begin{eqnarray*}
(c_1^{+}(\bar{\frak{s}}))^2&=& (c_1^{+}(\frak{s}_1)+c_1^{+}(\frak{s}_2))^2\\ &=&(c_1^{+}(\frak{s}_1))^2+2c_1^{+}(\frak{s}_1)\cdot c_1^{+}(\frak{s}_2)+(c_1^{+}(\frak{s}_2))^2\\ &\geq& (c_1^{+}(\frak{s}_1))^2+2c_1^{+}(\frak{s}_1)\cdot c_1^{+}(\frak{s}_2).
\end{eqnarray*}

\begin{lem}
$- \frak{s}_{N}:=\frak{s}_{N}\otimes (-\det(\frak{s}_{N}))$ is also $\Bbb Z_k$-equivariant.
\end{lem}
\begin{proof}
Since $\frak{s}_{N}$ is $\Bbb Z_k$-equivariant, so is its associated determinant line bundle $\det(\frak{s}_{N})$. Therefore $\frak{s}_{N}\otimes (-\det(\frak{s}_{N}))$ is also $\Bbb Z_k$-equivariant.
\end{proof}

Let $\bar{\frak{s}}'$ be the Spin$^c$ structure on $\bar{M}_k$ replacing $\frak{s}_{N}$ in $\bar{\frak{s}}$ by $-\frak{s}_{N}$, and  $\frak{s}_1'$ and $\frak{s}_2'$ be defined as above. Then  $\frak{s}_1' =\frak{s}_1$ and $c_1(\frak{s}_2')=-c_1(\frak{s}_2)$. Therefore we have either  $$ c_1^{+}(\frak{s}_{1})\cdot c_1^{+}(\frak{s}_{2})\geq 0,$$ or  $$ c_1^{+}(\frak{s}_{1}')\cdot c_1^{+}(\frak{s}_{2}')\geq 0.$$ In the first case,
\begin{eqnarray*}
(c_1^{+}(\bar{\frak{s}}))^2 &\geq&
 (c_1^{+}(\frak{s}_{1}))^2\\
&\geq&  c_1^2(\frak{s}_{1})\\
&=& k \ c_1^2(\frak{s})\\
&\geq & k(2\chi(M)+3\tau (M)),
\end{eqnarray*}
where the last inequality holds because the Seiberg-Witten moduli space of $\frak{s}$ on $M$ has nonnegative dimension.
Likewise in the second case, $$(c_1^{+}(\bar{\frak{s}}'))^2 \geq k(2\chi(M)+3\tau (M)).$$

Now let's assume to the contrary that $\bar{M}_k$ does admit such a solution $\{g(t)|t\geq 0\}$ of the normalized Ricci flow.
We claim that there exists a constant $c>0$ such that
\begin{eqnarray*}
\breve{s}_{g(t)}:=\min_{x\in X}s_{g(t)}(x)< -c.
\end{eqnarray*}
By Theorem \ref{firstth}, both $\bar{\frak{s}}$ and $\bar{\frak{s}}'$ are $\Bbb Z_k$-monopole classes on $\bar{M}_k$. If there does not exist such $c>0$, then by the same method as in Theorem \ref{main2}, for any $\epsilon>0$, there exists $t_\epsilon >0$ such that
$$(c_1^{+_{t_\epsilon}}(\bar{\frak{s}}))^2\leq \frac{\epsilon^4}{32\pi^2}\int_Xd\mu_{g(0)},$$  and
$$(c_1^{+_{t_\epsilon}}(\bar{\frak{s}}'))^2\leq \frac{\epsilon^4}{32\pi^2}\int_Xd\mu_{g(0)},$$ both of which together imply
$$k(2\chi(M)+3\tau (M))\leq \frac{\epsilon^4}{32\pi^2}\int_Xd\mu_{g(0)}.$$ By the assumption $2\chi(M)+3\tau (M)>0$, the claim is justified, and we obtain (\ref{allsave}) by Lemma 3.1 of \cite{FZZ}.

Then proceeding as in the last part in the proof of Theorem \ref{main2} we get
\begin{eqnarray*}
2\chi(\bar{M}_k)+3\tau (\bar{M}_k)&\geq& \liminf_{m\rightarrow \infty}\frac{1}{4\pi^2}\int_{m}^{m+1}\int_X (\frac{s_{g(t)}^2}{24}+2|W_{g(t)}^+|^2)\ d\mu_{g(t)}dt \\ &\geq&
\liminf_{m\rightarrow \infty}\int_m^{m+1}\max(\frac{2}{3}(c_1^{+_t}(\bar{\frak{s}}))^2, \frac{2}{3}(c_1^{+_t}(\bar{\frak{s}}'))^2)dt\\ &\geq&
\frac{2}{3}k(2\chi(M)+3\tau (M)).
\end{eqnarray*}
A simple computation gives
\begin{eqnarray*}
2\chi(\bar{M}_k)+3\tau (\bar{M}_k)&=& k(2\chi(M)+3\tau (M))+2\chi(N)+3\tau (N)-4k\\
&=& k(2\chi(M)+3\tau (M))+4(1-k)-4b_1(N)-b_2(N).
\end{eqnarray*}
Plugging this into the above gives $$k(2\chi(M)+3\tau (M))+4(1-k)-4b_1(N)-b_2(N)\geq \frac{2}{3}k(2\chi(M)+3\tau (M))$$ which simplifies to $$\frac{k}{3}(2\chi(M)+3\tau (M))\geq 4(k-1)+4b_1(N)+b_2(N),$$  yielding a contradiction.
\end{proof}

The following lemma is a slight generalization of \cite[Theorem 7.1]{sung5}.
\begin{lem}\label{repenter}
Let $M$  be a smooth closed  4-manifold and $\{M_i|i\in \frak I\}$ be a family of smooth 4-manifolds such that every $M_i$ is homeomorphic to $M$ and the numbers of mod 2 basic classes of $M_i$'s are all mutually different, but each $M_i\#l_i(S^2\times S^2)$ is diffeomorphic to $M\#l_i(S^2\times S^2)$ for an integer $l_i\geq 1$.

If $l_{max}:=\sup_{i\in\frak I}l_{i}<\infty$, then for any integers $k\geq 2, n\geq 0$, and $l\geq l_{max}+1$, $$X:=klM\# kln\overline{\Bbb CP}_2 \#(l-1)(S^2\times S^2)$$
admits an $\frak I$-family of topologically equivalent but smoothly
distinct non-free actions of $\Bbb Z_k\oplus H$ where $H$ is any
group of order $l$ acting freely on $S^3$.
\end{lem}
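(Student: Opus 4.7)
The strategy is to construct, for each $i \in \frak I$, a smooth $(\Bbb Z_k \oplus H)$-action $\rho_i$ on $X$ by an equivariant connected-sum construction that uses $M_i$ as its basic building block, and then to verify: (a) that the underlying smooth 4-manifolds produced are all diffeomorphic to $X$; (b) that the $\rho_i$ are pairwise topologically equivalent; and (c) that they are nevertheless pairwise smoothly inequivalent. The construction follows the same blueprint as \cite[Theorem 7.1]{sung5}, the ``slight generalization'' being the replacement of the single cyclic factor there by the product $\Bbb Z_k \oplus H$.

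Concretely, I would first build an auxiliary smooth closed 4-manifold $N$ carrying a smooth $(\Bbb Z_k\oplus H)$-action with a distinguished free orbit of length $kl$, arranged so that $N$ is diffeomorphic to $(l-1)(S^2\times S^2)$: the $H$-action comes from the standard free $H$-action on $S^3\subset \Bbb R^4$ extended across an isolated fixed point of $SO(4)$-type, while the $\Bbb Z_k$-action is arranged on the $l-1$ hyperbolic summands so as to commute with $H$. I then form the $(\Bbb Z_k\oplus H)$-equivariant smooth connected sum of $N$ with $kl$ copies of $M_i \# n\overline{\Bbb CP}_2$, one attached at each point of the free $kl$-orbit, with the group acting inside each summand only by permuting the $kl$ copies. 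The result is a smooth closed 4-manifold $X_i$ diffeomorphic to $klM_i \# kln\overline{\Bbb CP}_2 \# (l-1)(S^2\times S^2)$, equipped with a smooth $(\Bbb Z_k\oplus H)$-action $\rho_i$ which is non-free since the $H$-fixed point of $N$ persists.

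To identify $X_i$ with $X$ for every $i$: since $l\geq l_{max}+1\geq l_i+1$, the hypothesis $M_i \# l_i(S^2\times S^2) \cong M \# l_i(S^2\times S^2)$ upgrades, by $\#$-ing additional $S^2\times S^2$-summands, to $M_i \# (l-1)(S^2\times S^2) \cong M \# (l-1)(S^2\times S^2)$. Absorbing the $(l-1)(S^2\times S^2)$ factor into one $M_i$-summand at a time, applying this diffeomorphism, and re-emerging converts $klM_i \# (l-1)(S^2\times S^2)$ into $klM \# (l-1)(S^2\times S^2)$, so $X_i \cong X$ smoothly. Topological equivalence of the family $\{\rho_i\}$ then follows immediately from the construction: since each $M_i$ is homeomorphic to $M$, the equivariant-connected-sum recipe yields a uniform topological model, and an intertwining homeomorphism $X_i \to X_j$ can be built from homeomorphisms $M_i \approx M_j$ applied summand-by-summand.

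The hardest part is the smooth distinctness of the $\rho_i$. For this I would apply a $(\Bbb Z_k\oplus H)$-equivariant gluing result for Seiberg-Witten invariants of the type of Theorem~\ref{firstth} --- an iterated/combined version suited to the product group --- to conclude that each mod~2 basic class of $M_i$ produces a distinct $(\Bbb Z_k\oplus H)$-equivariant Spin$^c$ structure on $(X,\rho_i)$ whose $(\Bbb Z_k\oplus H)$-Seiberg-Witten invariant is nonzero mod~2. The number of such equivariant mod~2 basic classes is an invariant of the smooth $(\Bbb Z_k\oplus H)$-action; since the $M_i$ are assumed to have mutually distinct numbers of mod~2 basic classes, this invariant separates the $\rho_i$'s smoothly. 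The main obstacle is thus to extend the cyclic-group equivariant gluing of \cite[Theorem 7.1]{sung5} to the product $\Bbb Z_k\oplus H$, and to verify that distinct basic classes of $M_i$ produce distinct (not merely nonzero) equivariant basic classes on $X_i$; once this is established, the counting argument is immediate.
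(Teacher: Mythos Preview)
Your proposal diverges from the paper's argument at a structural level, and the divergence creates a genuine gap.

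In the paper, the $H$-action on $X$ is \emph{free}: one recalls that $(l-1)(S^2\times S^2)$ is the $l$-fold cover of $\widehat{S^1\times L}$ (the surgery on $S^1\times L$, $L=S^3/H$, along an $S^1\times\{\textrm{pt}\}$), with $H$ acting as the deck group. The whole manifold $X=klM_i\# kln\overline{\Bbb CP}_2\#(l-1)(S^2\times S^2)$ is then realised as the $l$-fold cover of $\bar M_{i,k}:=kM_i\# kn\overline{\Bbb CP}_2\#\widehat{S^1\times L}$, with $H$ again acting by deck transformations. The $\Bbb Z_k$-action is a rotation of $\widehat{S^1\times L}$ in the $S^1$-direction (fixing the core $\{0\}\times S^2$ of the glued-in $D^2\times S^2$), lifted to the cover; it is this $\Bbb Z_k$-factor, not $H$, that makes the combined action non-free. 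Your description of $N$ with an $H$-fixed point is therefore not the construction used, and your sketch does not explain why such an $N$ would be diffeomorphic to $(l-1)(S^2\times S^2)$.

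More importantly, because $H$ acts freely in the paper's construction, smooth distinctness is proved \emph{on the quotient}: a smooth equivalence of $(\Bbb Z_k\oplus H)$-actions on $X$ would descend to a smooth equivalence of $\Bbb Z_k$-actions on $X/H=\bar M_{i,k}$, and these are separated by the already-established $\Bbb Z_k$-monopole polynomial (Theorem~4.2 of \cite{sung5}), which satisfies $SW^{\Bbb Z_k}_{\bar M_{i,k}}\equiv SW_{M_i}\cdot(\text{fixed factor})$ mod~2 and hence has a number of monomials determined by the number of mod~2 basic classes of $M_i$. Thus no new equivariant gluing theorem for the product group $\Bbb Z_k\oplus H$ is needed at all. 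Your proposal, by contrast, makes the whole argument hinge on exactly such a theorem (``the main obstacle is thus to extend the cyclic-group equivariant gluing $\ldots$ to the product $\Bbb Z_k\oplus H$''), which is neither available in \cite{sung5} nor supplied here. That is the gap: the paper's free-$H$ / pass-to-quotient trick is precisely what lets one avoid developing $(\Bbb Z_k\oplus H)$-equivariant Seiberg--Witten theory, and you have replaced it with an obligation you do not discharge.
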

\begin{proof}
The following proof for $n\geq 1$ cases is almost parallel to the $n=0$ case of \cite{sung5}, and first recall that $(l-1)(S^2\times S^2)$ admits an $H$ action defined as the deck transformation
map of the $l$-fold covering map onto $\widehat{S^1\times L}$ for $L=S^3/H$, where $\widehat{S^1\times L}$ is the manifold obtained from the surgery on $S^1\times L$ along an $S^1\times \{\textrm{pt}\}$.

Think of $X$ as $$klM_i\# kln\overline{\Bbb CP}_2\#(l-1)(S^2\times
S^2),$$ on which  $H$ acts as the deck transformation
map of the $l$-fold covering map onto
$$\bar{M}_{i,k}:=kM_i\# kn\overline{\Bbb CP}_2\#\widehat{S^1\times L}.$$ To
define a $\Bbb Z_k$-action, note that $\bar{M}_{i,k}$ has a $\Bbb Z_k$-action coming from the $\Bbb Z_k$-action of $ kn\overline{\Bbb CP}_2\#\widehat{S^1\times L}$ defined in \cite[Theorem 6.4]{sung5}, which is basically a
rotation along the $S^1$-direction, and whose fixed point set is $\{0\}\times S^2$ in the attached $D^2\times S^2$. This $\Bbb Z_k$ action is
obviously lifted to the above $l$-fold cover, and it commutes
with the above defined $H$ action. Thus we have an $\frak I$-family
of $\Bbb Z_k\oplus H$ actions on $X$,
which are all topologically equivalent by using the homeomorphism
between each $M_i$ and $M$.

Recall from \cite[Theorem 6.4]{sung5} that all the Spin$^c$ structures on
$\widehat{S^1\times L}$ are $\Bbb Z_k$-equivariant and satisfy
$c_1^2=-b_2(\widehat{S^1\times L})=0$. Let $\bar{\frak{s}}_i$ be the $\Bbb Z_k$-equivariant
Spin$^c$ structure on $\bar{M}_{i,k}$ obtained by gluing a Spin$^c$
structure $\frak{s}_i$ of $M_i$ and a $\Bbb Z_k$-equivariant Spin$^c$ structure $\frak{s}_N$ on $\widehat{S^1\times L}\# kn\overline{\Bbb CP}_2$ satisfying $$c_1^2(\frak{s}_N)=-b_2(\widehat{S^1\times L}\# kn\overline{\Bbb CP}_2)=-kn.$$ By \cite[Theorem 4.2]{sung5} and the fact that $b_1(\widehat{S^1\times L}\#kn\overline{\Bbb CP}_2)=0$, for
any Spin$^c$ structure ${\frak{s}}_i$ on $M_i$,
$$SW^{\Bbb Z_k}_{\bar{M}_{i,k},\bar{\frak{s}}_i}\equiv SW_{M_i,\frak{s}_i}\ \ \ \textrm{mod}\ 2,$$
and hence 
the corresponding Seiberg-Witten polynomials satisfy $$SW^{\Bbb
Z_k}_{\bar{M}_{i,k}}\equiv SW_{M_i}\sum_{[\alpha],[\beta]}[\alpha][\beta]$$ modulo 2, where $[\alpha]$ runs through any element of $H^2(\widehat{S^1\times L};\Bbb Z)=H_1(L;\Bbb Z)$ and $[\beta]$ runs through any $\Bbb Z_k$-equivariant element of $H^2(kn\overline{\Bbb CP}_2;\Bbb Z)$ satisfying that $[\beta]^2=-kn,$ and $[\beta]$ restricts to a generator of the 2nd cohomology in each $\overline{\Bbb CP}_2$-summand. Therefore $SW^{\Bbb
Z_k}_{\bar{M}_{i,k}}$ mod 2 for all $i$ have mutually different numbers of monomials, and hence the $\frak I$-family of $\Bbb Z_k\oplus H$ actions on $X$ cannot
be smoothly equivalent, completing the proof.
\end{proof}
\begin{thm}\label{LLL}
Let  $k\geq 2$ be an integer and $H$ be a finite group of order $l\geq 2$ acting freely on $S^3$. Then  for infinitely many $m\in \Bbb Z^+$ and any integer $n> \frac{4m-2}{3}$,
the manifold $$(klm+l-1)\Bbb CP_2 \# (kl(m+n)+l-1)\overline{\Bbb CP}_2$$ admits an infinite family of topologically equivalent but smoothly distinct non-free actions of $\Bbb Z_k\oplus H$
such that it admits no quasi-nonsingular solution to the normalized Ricci flow for any initial metric invariant under such an action.
\end{thm}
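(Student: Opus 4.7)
The plan is to combine Lemma \ref{repenter} with Theorem \ref{grace}, applied to a carefully chosen building block $M$, and then to realise the target connected sum topologically as the manifold produced by the lemma.

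First, for infinitely many $m\geq 2$, I will select a smooth closed simply connected $4$-manifold $M$ with $\tau(M)=0$, $\chi(M)=2m+2$, and odd intersection form $m\langle 1\rangle\oplus m\langle -1\rangle$, admitting a Spin$^c$ structure $\frak{s}$ with nonzero mod 2 Seiberg--Witten invariant and an infinite Fintushel--Stern-type family $\{M_i\}_{i\in\frak I}$ of pairwise non-diffeomorphic smooth structures with pairwise distinct numbers of mod 2 basic classes, all becoming diffeomorphic to $M$ after one connected sum with $S^2\times S^2$ (so $l_{\max}=1$). Concretely, for infinitely many $m$ one starts from a suitable simply connected symplectic $4$-manifold of signature zero---produced by surface-geography constructions or by Akhmedov--Park-type small exotic $4$-manifolds---containing a $c$-embedded torus with simply connected complement, and lets $\{M_i\}$ be the knot-surgery family indexed by knots with pairwise inequivalent Alexander polynomials mod 2.

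With $l:=|H|\geq 2=l_{\max}+1$, Lemma \ref{repenter} produces an infinite $\frak I$-family of topologically equivalent, smoothly distinct, non-free $\Bbb Z_k\oplus H$-actions on $X:=klM\#kln\overline{\Bbb CP}_2\#(l-1)(S^2\times S^2)$. A direct Euler characteristic and signature count gives $\chi(X)=2klm+kln+2l$ and $\tau(X)=-kln$; moreover the intersection form of $X$ is odd (because $m\geq 1$) and indefinite, so by the classification of indefinite unimodular odd forms it coincides with $(klm+l-1)\langle 1\rangle\oplus(kl(m+n)+l-1)\langle -1\rangle$. Since $X$ is simply connected, Freedman's theorem supplies a homeomorphism from $X$ to $(klm+l-1)\Bbb CP_2\#(kl(m+n)+l-1)\overline{\Bbb CP}_2$ along which the whole $\frak I$-family of actions transports to the target.

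To rule out any invariant quasi-nonsingular normalised Ricci flow, note that $X$ is the universal $l$-fold cover of $\bar{M}_{i,k}:=kM_i\#N$ with $N:=\widehat{S^1\times L}\#kn\overline{\Bbb CP}_2$ and $L=S^3/H$; a Mayer--Vietoris computation applied to the surgery defining $\widehat{S^1\times L}$ yields $b_1(N)=0$ and $b_2(N)=kn$. The hypothesis $n>(4m-2)/3$ rearranges to $4m-3n<2$, which for every $k\geq 2$ implies $4m-3n<8-12/k$, so that
\[
0<2\chi(M_i)+3\tau(M_i)=4+4m<\tfrac{1}{k}\bigl(12(k-1)+12b_1(N)+3b_2(N)\bigr).
\]
Theorem \ref{grace} therefore rules out any $\Bbb Z_k$-invariant quasi-nonsingular normalised Ricci flow on $\bar{M}_{i,k}$. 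Since $H$ acts freely and isometrically on any $\Bbb Z_k\oplus H$-invariant metric on $X$, such a flow on $X$ would descend to a $\Bbb Z_k$-invariant quasi-nonsingular flow on $\bar{M}_{i,k}$ (the normalised Ricci flow is natural under finite smooth coverings, and both $T=\infty$ and the sup-bound on $|s_{g(t)}|$ persist), a contradiction. The main obstacle is the first step: exhibiting, for infinitely many $m$, a simply connected signature-zero $4$-manifold with odd intersection form, nontrivial mod 2 Seiberg--Witten invariant, and an infinite Fintushel--Stern family of exotic smoothings with uniformly bounded stabilisation number; signature-zero examples of this kind are more delicate than the signature-negative ones underlying the classical Fintushel--Stern construction, but sufficient infinite sequences can be extracted from the existing small-exotic-$4$-manifold literature.
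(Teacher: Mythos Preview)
Your overall strategy---feed a suitable $M$ into Lemma~\ref{repenter} and then apply Theorem~\ref{grace} to the quotient $\bar{M}_{i,k}=X/H$---is exactly the paper's, and your numerical verification that $n>(4m-2)/3$ forces the inequality required in Theorem~\ref{grace} is correct. The difference, and the gap, is in the choice of building block $M$.

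You take $M$ to be an \emph{exotic} copy of $m\Bbb CP_2\#m\overline{\Bbb CP}_2$ with nonzero mod~2 Seiberg--Witten invariant, together with a knot-surgery family $\{M_i\}$, and you concede that producing such signature-zero odd examples is ``the main obstacle.'' Even granting their existence, a second gap remains: Lemma~\ref{repenter} produces smooth actions on $X=klM\#kln\overline{\Bbb CP}_2\#(l-1)(S^2\times S^2)$, and you identify this with the target only via a Freedman \emph{homeomorphism}. A homeomorphism does not carry smooth group actions to smooth actions on the standard smooth structure of $(klm+l-1)\Bbb CP_2\#(kl(m+n)+l-1)\overline{\Bbb CP}_2$; you need a diffeomorphism, and since your $M$ is itself exotic there is no reason why $klM$ stabilised by $(l-1)$ copies of $S^2\times S^2$ and $kln$ copies of $\overline{\Bbb CP}_2$ should become standard.

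The paper avoids both difficulties at once by taking $M=m(S^2\times S^2)$ with its \emph{standard} smooth structure. The required infinite family $\{M_i\}$ of exotic $m(S^2\times S^2)$'s with pairwise distinct numbers of mod~2 basic classes and with $M_i\#(S^2\times S^2)\cong(m+1)(S^2\times S^2)$ is supplied, for infinitely many $m$, by the Hanke--Kotschick--Wehrheim construction~\cite{Kot}; no new geography input is needed. Then $X=(klm+l-1)(S^2\times S^2)\#kln\overline{\Bbb CP}_2$ on the nose, and the classical \emph{diffeomorphism} $(S^2\times S^2)\#\overline{\Bbb CP}_2\cong\Bbb CP_2\#2\overline{\Bbb CP}_2$ identifies $X$ smoothly with the stated target, so the smooth actions coming from Lemma~\ref{repenter} already live on the correct manifold without any appeal to Freedman.
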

\begin{proof}
Note that the above manifold is diffeomorphic to $$Y:=(klm+l-1)(S^2\times S^2) \# kln\overline{\Bbb CP}_2.$$ As in \cite[Corollary 7.2]{sung5}, we use the construction of B. Hanke, D. Kotschick, and J. Wehrheim \cite{Kot}, which shows that
$m(S^2\times S^2)$ for infinitely many $m$ has the property of $M$ in Lemma \ref{repenter} with each $l_i=1$ and $|\frak I|=\infty$.
Using Lemma \ref{repenter}, $Y$ admits such  $\Bbb Z_k\oplus H$ actions so that
\begin{eqnarray*}
Y/H=kM_i\# kn\overline{\Bbb CP}_2 \# \widehat{S^1\times L}
\end{eqnarray*}
for $\{M_i|i\in \frak I\}$.

Thus we only need to show that $Y/H$ does not admit a quasi-nonsingular solution to the normalized Ricci flow invariant under the $\Bbb Z_k$ action.
As proven in \cite[Theorem 6.4]{sung5}, $ kn\overline{\Bbb CP}_2 \# \widehat{S^1\times L}$ is an example of $N$ in Theorem \ref{firstth}, and hence we can apply Theorem \ref{grace} to $Y/H$.
Using $b_1(\widehat{S^1\times L})=b_2(\widehat{S^1\times L})=0$, 
a simple computation gives
\begin{eqnarray*}
\frac{12(k-1)+12b_1(N)+3b_2(N)}{k} &=& \frac{12(k-1)}{k}+3n\\ &\geq& 6+3n\\ &>& 4m+4\\ &=& 2\chi(M_i)+3\tau (M_i),
\end{eqnarray*}
which completes the proof.
\end{proof}

\begin{rmk}
Just for a simple remark, the above manifold of Theorem \ref{LLL} satisfies the Hitchin-Thorpe inequality when $n\leq 4(m+\frac{1}{k})$.

Also note that it obviously admits a metric of positive scalar curvature. But such metrics are never invariant under those $\Bbb Z_k\oplus H$ actions, because it has nontrivial $\Bbb Z_k\oplus H$ monopole invariant. C. LeBrun \cite{LB6} was the first who discovered that a finite group acts freely on certain connected sums $m\Bbb CP_2 \# n\overline{\Bbb CP}_2$ so that there exist no metrics of positive scalar curvature invariant under the action, and moreover the quotient manifolds have negative Yamabe invariants. On the other hand, D. Ruberman \cite{ruberman} showed that for any $m\geq 2$ and $n>10m$, the space of positive scalar curvature metrics on $2m\Bbb CP_2 \# n\overline{\Bbb CP}_2$ has infinitely many components.

For an example of $H$, one can take $\Bbb Z_l$ for $(k,l)=1$ so that $\Bbb Z_k\oplus H$ is isomorphic to $\Bbb Z_{kl}$.
\end{rmk}

\section{Computation of $G$-Yamabe invariant and orbifold Yamabe invariant}
\setcounter{equation}{0}
When a smooth closed $n$-manifold $X$ admits a smooth group action by a compact Lie group $G$, the $G$-Yamabe invariant can be defined in an analogous way to the ordinary Yamabe invariant. For a $G$-invariant Riemannian metric $g$ on $X$, we let $[g]_G$ be the set of smooth $G$-invariant metrics conformal to $g$. Following \cite{sung11},  define the \emph{$G$-Yamabe constant} of $(X,[g]_G)$ as
\begin{eqnarray}\label{yam}
Y(X,[g]_G):=\inf_{\hat{g}\in [g]_G}\frac{\int_X s_{\hat{g}}\
dV_{\hat{g}}}{(\int_X dV_{\hat{g}})^{\frac{n-2}{n}}},
\end{eqnarray}
 and the \emph{$G$-Yamabe invariant} of $X$ as $$Y_G(X):=\sup_{[g]_G} Y(X,[g]_G).$$  When the $G$-action is trivial, $Y(X,[g]_G)$ and $Y_G(X)$ are obviously  the ordinary Yamabe constant $Y(X,[g])$ and the ordinary Yamabe invariant $Y(X)$ respectively.

By the result of E. Hebey and M. Vaugon \cite{HV}, the $G$-equivariant Yamabe problem can be solved for $n\geq 3$ by minimizing the Yamabe functional defined on each $[g]_G$. The minimizers have constant scalar curvature, and there exists the Aubin-type inequality $$Y(X,[g]_G)\leq \Lambda_n (\inf_{x\in X}|Gx|)^{\frac{2}{n}},$$ where $\Lambda_n$ defined as $n(n-1)(\textrm{Vol}(S^n(1)))^{\frac{2}{n}}$ is the Yamabe invariant $Y(S^n)$ of $S^n$, and $|Gx|$ denotes the cardinality of the orbit of $x$.

When $Y(X,[g]_G)\leq 0$, by definition $$Y(X,[g])\leq Y(X,[g]_G)\leq  0,$$ and hence an ordinary Yamabe minimizer in $[g]$  must also be a $G$-Yamabe minimizer, because the metrics with nonpositive constant scalar curvature are unique up to constant in a conformal class so that they are also $G$-invariant. Thus in that case
\begin{eqnarray*}
Y(X,[g]_G)&=&Y(X,[g]).
\end{eqnarray*}

We present some practical formulae for computing $Y(X,[g]_G)$ and $Y_G(X)$, which are exactly the same forms as the ordinary Yamabe case.
\begin{prop}\label{repent}
Let $r\in [\frac{n}{2},\infty]$. Then
\begin{eqnarray*}
|Y(X,[g]_G)|&=& \inf_{\tilde{g}\in[g]_G}(\int_X |s_{\tilde{g}}|^r
d\mu_{\tilde{g}})^{\frac{1}{r}}(\textrm{Vol}_{\tilde{g}})^{\frac{2}{n}-\frac{1}{r}}\\ &=& \inf_{\tilde{g}\in[g]_G}(\int_X |s^-_{\tilde{g}}|^r
d\mu_{\tilde{g}})^{\frac{1}{r}}(\textrm{Vol}_{\tilde{g}})^{\frac{2}{n}-\frac{1}{r}}\ \ \ \textrm{if}\ \ Y(X,[g]_G)\leq 0,
\end{eqnarray*}
where the infimums are realized only by the minimizer in (\ref{yam}), and $s_{\tilde{g}}^-$ is defined as $\min(s_{\tilde{g}},0)$.

If $Y_G(X)\leq 0$,
\begin{eqnarray*}
Y_G(X)&=& -\inf_{g\in \mathcal{M}_G}(\int_X |s_{g}|^r
d\mu_{g})^{\frac{1}{r}}(\textrm{Vol}_{g})^{\frac{2}{n}-\frac{1}{r}}\\
&=& -\inf_{g\in \mathcal{M}_G}(\int_X |s_{g}^-|^r
d\mu_{g})^{\frac{1}{r}}(\textrm{Vol}_{g})^{\frac{2}{n}-\frac{1}{r}},\nonumber
\end{eqnarray*}
where $\mathcal{M}_G$ is the space of all smooth $G$-invariant Riemannian metrics on $X$.
\end{prop}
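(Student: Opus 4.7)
The plan is to establish both claimed identities for $|Y(X,[g]_G)|$ by a two-sided estimate: I will show that each of
\[F_r(\tilde g):=\Big(\int_X|s_{\tilde g}|^r\, d\mu_{\tilde g}\Big)^{1/r}(\mv_{\tilde g})^{2/n-1/r}\]
and its variant $F_r^-(\tilde g)$, obtained by replacing $|s_{\tilde g}|$ with $|s_{\tilde g}^-|$, is bounded below by $|Y(X,[g]_G)|$ on all of $[g]_G$, with equality attained at the Hebey-Vaugon $G$-Yamabe minimizer. The first step is to invoke \cite{HV} to obtain a $G$-invariant constant scalar curvature representative $\hat g\in[g]_G$, normalize so that $\mv_{\hat g}=1$ (which forces $s_{\hat g}=Y(X,[g]_G)$), and verify by direct computation that $F_r(\hat g)=|Y(X,[g]_G)|$, and that the same holds for $F_r^-(\hat g)$ when $Y(X,[g]_G)\leq 0$ (since then $s_{\hat g}\leq 0$).

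The second step, which is the main obstacle, is the lower bound $F_r(\tilde g)\geq|Y(X,[g]_G)|$ for arbitrary $\tilde g\in[g]_G$. In the easy case $Y(X,[g]_G)\geq 0$, the Yamabe inequality $\int s_{\tilde g}d\mu_{\tilde g}\geq Y(X,[g]_G)(\mv_{\tilde g})^{(n-2)/n}$ combined with $\int s_{\tilde g}\leq\int|s_{\tilde g}|$ and H\"older immediately yields $|Y(X,[g]_G)|\leq F_r(\tilde g)$. The case $Y(X,[g]_G)<0$ is where the Yamabe equation is essential: writing $\hat g=\phi^{4/(n-2)}\tilde g$, multiplying the conformal scalar curvature equation by $\phi$, integrating against $d\mu_{\tilde g}$, and discarding the nonnegative Dirichlet term yields
\[\int_X s_{\tilde g}\phi^2\, d\mu_{\tilde g}\;\leq\; s_{\hat g}\,\mv_{\hat g}=-|Y(X,[g]_G)|,\]
so that $\int_X|s_{\tilde g}^-|\phi^2\, d\mu_{\tilde g}\geq|Y(X,[g]_G)|$. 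H\"older's inequality with conjugate exponents $n/2$ and $n/(n-2)$, exploiting the normalization $\int\phi^{2n/(n-2)}d\mu_{\tilde g}=\mv_{\hat g}=1$, converts this into $F_{n/2}^-(\tilde g)\geq|Y(X,[g]_G)|$; a second H\"older step then boosts this to $F_r^-(\tilde g)\geq F_{n/2}^-(\tilde g)\geq|Y(X,[g]_G)|$ for any $r\geq n/2$, and the pointwise bound $F_r(\tilde g)\geq F_r^-(\tilde g)$ completes both lower bounds.

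The ``realized only by the minimizer'' clause will follow from tracking the equality cases in these H\"older steps together with the uniqueness (up to scale) of nonpositive constant scalar curvature metrics in a conformal class, as in \cite{sung4}. Finally, for the formulas for $Y_G(X)$, the hypothesis $Y_G(X)\leq 0$ forces $Y(X,[g]_G)\leq 0$ for every $G$-invariant conformal class, so the per-class identities just proved apply uniformly; taking $-\sup$ over conformal classes and interchanging with the inner infimum then gives $Y_G(X)=-\inf_{g\in\mathcal M_G}F_r(g)$, and the parallel computation with $F_r^-$ in place of $F_r$ yields the second formula.
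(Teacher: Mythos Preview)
Your argument is correct. The route differs from the paper's in the nonpositive case. The paper exploits the observation, recorded just before the proposition, that when $Y(X,[g]_G)\le 0$ the ordinary Yamabe minimizer in $[g]$ is automatically $G$-invariant, so $Y(X,[g]_G)=Y(X,[g])$; it then simply quotes the known ordinary formulae
\[
-Y(X,[g])=\inf_{\tilde g\in[g]}F_r(\tilde g)=\inf_{\tilde g\in[g]}F_r^-(\tilde g)
\]
and remarks that the infimum, being attained at a $G$-invariant metric, is unchanged upon restricting to the subset $[g]_G\subset[g]$. Your proof instead reproves these formulae from scratch in the equivariant setting, by multiplying the conformal scalar curvature equation for the Hebey--Vaugon minimizer against the conformal factor and applying H\"older twice. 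What the paper's reduction buys is brevity and a clean explanation of why the equivariant and non-equivariant infima coincide; what your direct argument buys is self-containment (no appeal to the non-equivariant $L^r$ formulae) and an explicit trace of the equality cases. Both the positive case and the deduction of the $Y_G(X)$ formulae are handled the same way in each approach.
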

\begin{proof}
If $Y(X,[g]_G)> 0$, then it can be proved in the same way as the ordinary Yamabe case. (For a proof, see \cite{sung0}.)

If $Y(X,[g]_G)\leq 0$, then
\begin{eqnarray*}
Y(X,[g]_G)&=& Y(X,[g])\\ &=&-\inf_{\tilde{g}\in[g]}(\int_X |s_{\tilde{g}}|^r
d\mu_{\tilde{g}})^{\frac{1}{r}}(\textrm{Vol}_{\tilde{g}})^{\frac{2}{n}-\frac{1}{r}}\\ &=& -\inf_{\tilde{g}\in[g]}(\int_X |s^-_{\tilde{g}}|^r
d\mu_{\tilde{g}})^{\frac{1}{r}}(\textrm{Vol}_{\tilde{g}})^{\frac{2}{n}-\frac{1}{r}},
\end{eqnarray*}
and these infimums are realized by the Yamabe minimizers, which are $G$-invariant. Therefore it's enough to take the infimums on a smaller set $[g]_G$.

The formulae for $Y_G(X)$ are now straightforward.
\end{proof}

One of the important facts about $G$-Yamabe constant and
$G$-Yamabe invariant is that they are basically equivalent to
the orbifold Yamabe constant and the orbifold Yamabe invariant of the
quotient manifold, when $G$ is finite and $X/G$ is an orbifold.

Let $V$ be a closed orbifold of dimension $n$. For an orbifold Riemannian metric $g$ on
$V$, $[g]_{orb}$ denotes the set of orbifold Riemannian metrics
conformal to $g$. In the same as the ordinary Yamabe problem, K.
Akutagawa and B. Botvinnik \cite{aku} defined the \emph{orbifold
Yamabe constant} $Y(V,[g]_{orb})$ of  $[g]_{orb}$  as the infimum
of the normalized Einstein-Hilbert functional on   $[g]_{orb}$,
and the \emph{orbifold Yamabe invariant}
$$Y_{orb}(V):=\sup_{[g]_{orb}} Y(V,[g]_{orb}).$$ They also obtained the Aubin-type inequality
$$Y(V,[g]_{orb})\leq \min_{1\leq i\leq m}\frac{\Lambda_n}{|\Gamma_i|^{\frac{2}{n}}},$$ where $\{(\check{p}_1,\Gamma_1),\cdots,(\check{p}_m,\Gamma_m)\}$ is the singularity of $V$.

A group action is called \emph{pseudo-free}, if non-free orbits are isolated. For a smooth pseudo-free action on a smooth manifold by a finite group, its quotient space has a natural   orbifold structure.
\begin{thm}\label{orbi-yam}
Let $X$ be a smooth closed $n$-manifold with smooth pseudo-free action by a finite group $G$. Then for an orbifold Riemannian metric $g$ on $X/G$,
$$Y_{orb}(X/G,[g]_{orb})=\frac{Y_G(X, [\pi^*g])}{|G|^{\frac{2}{n}}},\ \  \textrm{and}\ \
Y_{orb}(X/G)=\frac{Y_G(X)}{|G|^{\frac{2}{n}}},$$ where $\pi : X\rightarrow X/G$ is the quotient map.
\end{thm}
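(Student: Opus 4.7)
The proof will rest on a bijective correspondence, via pullback along $\pi$, between smooth $G$-invariant Riemannian metrics on $X$ and orbifold Riemannian metrics on $X/G$. Because the action is pseudo-free, the singular set $\Sigma \subset X/G$ is a finite collection of points, the complement $\pi\colon X\setminus \pi^{-1}(\Sigma)\to X/G\setminus\Sigma$ is an honest $|G|$-fold Riemannian covering, and near each singularity $(\check p_i,\Gamma_i)$ an orbifold metric is by definition a $\Gamma_i$-invariant smooth metric on a uniformizing chart, which is exactly the local datum of a smooth $G$-invariant metric near the corresponding fixed point in $X$. Thus $g\mapsto \pi^*g$ defines a bijection between orbifold metrics on $X/G$ and $G$-invariant smooth metrics on $X$, and a $G$-invariant conformal factor $u$ on $X$ descends to (and pulls back from) a conformal factor on the orbifold $X/G$; hence this bijection restricts to a bijection $[g]_{orb}\leftrightarrow [\pi^*g]_G$ on each conformal class.

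The plan is then to track how the normalized Einstein-Hilbert functional transforms. Scalar curvature is a pointwise local invariant, so for any orbifold metric $\tilde g\in[g]_{orb}$ one has $s_{\pi^*\tilde g}(x)=s_{\tilde g}(\pi(x))$ wherever defined, and the covering property gives
\begin{equation*}
\int_X f\circ \pi \ d\mu_{\pi^*\tilde g}=|G|\int_{X/G} f\ d\mu_{\tilde g}
\end{equation*}
for any measurable $f$ (the singular locus has measure zero). Applying this to $f\equiv 1$ and to $f=s_{\tilde g}$ and substituting into the Yamabe functional gives
\begin{equation*}
\frac{\int_X s_{\pi^*\tilde g}\,d\mu_{\pi^*\tilde g}}{(\mv_{\pi^*\tilde g})^{(n-2)/n}}
=|G|^{1-\frac{n-2}{n}}\,\frac{\int_{X/G} s_{\tilde g}\,d\mu_{\tilde g}}{(\mv_{\tilde g})^{(n-2)/n}}
=|G|^{\frac{2}{n}}\,\frac{\int_{X/G} s_{\tilde g}\,d\mu_{\tilde g}}{(\mv_{\tilde g})^{(n-2)/n}}.
\end{equation*}

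With this scaling in hand, the first equality of the theorem follows by taking infimums over the two conformal classes, which are in bijection: $Y_G(X,[\pi^*g])=|G|^{2/n}\,Y_{orb}(X/G,[g]_{orb})$. For the Yamabe invariant statement, use the same bijection at the level of conformal classes (every $G$-invariant conformal class on $X$ is of the form $[\pi^*g]$ for a unique orbifold conformal class on $X/G$) and take the supremum.

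The only potentially subtle step is the faithfulness of the bijection between conformal classes near the singularities, together with ensuring that the orbifold Yamabe functional really is the right quantity to compare with: both issues are handled by the local uniformization description of orbifold metrics, which identifies the Yamabe minimizer of $[g]_{orb}$ in the sense of Akutagawa-Botvinnik with the $G$-Yamabe minimizer of $[\pi^*g]_G$ in the sense of Hebey-Vaugon. Once this identification is noted the two formulas are just a bookkeeping rescaling, so this compatibility is the main point to verify carefully; the rest of the argument is formal.
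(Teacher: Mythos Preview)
Your proposal is correct and follows exactly the approach of the paper, which simply records the two key observations $[\pi^*g]_G=\pi^*[g]_{orb}$ and that $\pi$ is a branched $|G|$-fold covering, then declares the result obvious. You have merely unpacked these into the explicit bijection of conformal classes and the $|G|^{2/n}$ scaling of the Yamabe functional, together with the remark that the isolated singular set has measure zero; nothing in your argument departs from the paper's route.
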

\begin{proof}
The proof is obvious from the observation that
$[\pi^*g]_G=\pi^*[g]_{orb}$ and $\pi$ is a branched $|G|$-fold covering.
\end{proof}

In \cite{sung11}, we obtained gluing formulae for the $G$-Yamabe invariant for the surgery of codimension 3 and more, which made it possible to compute some $G$-Yamabe invariants of products of spheres and their connected sums. Here, the existence of a $\Bbb Z_k$-monopole class on $\bar{M}_k$ enables us to compute its $\Bbb Z_k$-Yamabe invariant :
\begin{thm}\label{lastthm}
Let $M$ be a smooth closed oriented 4-manifold with a Spin$^c$ structure $\frak{s}$ satisfying  $$Y(M)=-4\sqrt{2}\pi \sqrt{c_1^2(\frak{s})},$$ and N, $\bar{M}_k$ be as in Theorem \ref{firstth}. Suppose that $\frak{s}$ has nonzero mod 2 Seiberg-Witten  invariant or nontrivial Bauer-Furuta invariant, and the $\Bbb Z_k$-action on $N$ is pseudo-free. Then
$$Y_{\Bbb Z_k}(\bar{M}_k)=\sqrt{k}Y(M),$$ and $$Y_{orb}(M\# N/\Bbb Z_k)=Y(M).$$
\end{thm}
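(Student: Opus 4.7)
The plan is to prove the $G$-Yamabe formula $Y_{\Bbb Z_k}(\bar{M}_k) = \sqrt{k}\,Y(M)$ by matching an upper and a lower bound; the orbifold identity then follows in one line from Theorem \ref{orbi-yam}.

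For the upper bound, the hypothesis that $SW_{M,\frak{s}}$ (mod $2$) or $BF_{M,\frak{s}}$ is nontrivial lets Theorem \ref{firstth} produce two $\Bbb Z_k$-monopole classes on $\bar{M}_k$, namely $c_1(\bar{\frak{s}})$ and $c_1(\bar{\frak{s}}')$ built from $\frak{s}_N$ and $-\frak{s}_N$, exactly as in Theorem \ref{grace}. For any $\Bbb Z_k$-invariant metric $g$, the Weitzenb\"ock estimate $|\Phi|^2 \leq \max(-s_g,0)$ together with $|F_A^+|^2 = |\Phi|^4/8$ gives $\int s_g^2\,d\mu_g \geq 32\pi^2(c_1^{+_g}(\bar{\frak{s}}))^2$, and similarly for $\bar{\frak{s}}'$. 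The $\pm$-sign argument inside the proof of Theorem \ref{grace} forces
$$\max\bigl\{(c_1^{+_g}(\bar{\frak{s}}))^2,\,(c_1^{+_g}(\bar{\frak{s}}'))^2\bigr\} \;\geq\; c_1^2(\frak{s}_1)\;=\;k\,c_1^2(\frak{s}),$$
so $\int s_g^2\,d\mu_g \geq 32\pi^2 k\,c_1^2(\frak{s})$ uniformly in $g$. Since $c_1^2(\frak{s})>0$ no $\Bbb Z_k$-invariant conformal class contains a metric of positive scalar curvature, hence $Y_{\Bbb Z_k}(\bar{M}_k)\leq 0$, and Proposition \ref{repent} with $r=n/2=2$ delivers $Y_{\Bbb Z_k}(\bar{M}_k) \leq -4\sqrt{2}\pi\sqrt{k\,c_1^2(\frak{s})} = \sqrt{k}\,Y(M)$.

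For the matching lower bound I would exhibit a family of $\Bbb Z_k$-invariant test metrics. Fix $\epsilon>0$, pick a unit-volume metric $h_\epsilon$ on $M$ with $\bigl(\int_M s_{h_\epsilon}^2\,d\mu\bigr)^{1/2} < |Y(M)| + \epsilon$, and equip $N$ with a $\Bbb Z_k$-invariant metric $h_N$ of positive scalar curvature, supplied by the hypothesis of Theorem \ref{firstth}. Place isometric copies of $h_\epsilon$ on each of the $k$ $M$-summands and glue them equivariantly to $(N,h_N)$ along thin Gromov--Lawson necks located at a single free $\Bbb Z_k$-orbit of $N$, with $\Bbb Z_k$ cyclically permuting the $M$-copies. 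Standard neck estimates plus a small conformal deformation on the PSC part $N$ drive the contribution of $N$ and of the necks to $\int s^2\,d\mu$ to zero, yielding $\bigl(\int_{\bar{M}_k} s^2\,d\mu\bigr)^{1/2} \leq \sqrt{k}(|Y(M)|+\epsilon) + o(1)$. Proposition \ref{repent} and $\epsilon\to 0$ give $Y_{\Bbb Z_k}(\bar{M}_k) \geq \sqrt{k}\,Y(M)$, and combined with the upper bound this yields equality.

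The orbifold conclusion now follows at once: because the $\Bbb Z_k$-action on $N$ is pseudo-free and the $M$-summands are attached along a free $\Bbb Z_k$-orbit of $N$, the induced action on $\bar{M}_k$ is pseudo-free, so Theorem \ref{orbi-yam} with $|G|=k$ and $n=4$ gives
$$Y_{orb}(M\#N/\Bbb Z_k)\;=\;Y_{orb}(\bar{M}_k/\Bbb Z_k)\;=\;\frac{Y_{\Bbb Z_k}(\bar{M}_k)}{k^{1/2}}\;=\;Y(M).$$
The main technical point is the equivariant lower-bound construction: non-equivariantly this is the classical Kobayashi connected-sum inequality, but here one must perform $k+1$ simultaneous connected sums through a free $\Bbb Z_k$-orbit while preserving invariance (which is automatic once the neck data are placed symmetrically over the orbit) and carry out the conformal blow-down on $N$ $\Bbb Z_k$-equivariantly; the equivariant gluing toolkit of \cite{sung11} handles exactly this situation, so no new analytic input beyond careful bookkeeping is required.
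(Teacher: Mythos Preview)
Your argument follows the same two-sided strategy as the paper: the Seiberg--Witten curvature estimate applied to the $\Bbb Z_k$-monopole classes $c_1(\bar{\frak s})$ and $c_1(\bar{\frak s}')$ (via the $\pm$-sign trick of Theorem~\ref{grace}) for one inequality, an equivariant Gromov--Lawson gluing of $k$ copies of a near-optimal metric on $M$ to a $\Bbb Z_k$-invariant PSC metric on $N$ for the other, and then Theorem~\ref{orbi-yam} for the orbifold statement. The order is reversed but the content is identical.

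One technical slip in your lower-bound step: you estimate $\int_{\bar M_k} s^2\,d\mu$ and claim that ``a small conformal deformation on the PSC part $N$'' drives its contribution to zero. This does not work: in dimension $4$ the quantity $\int s^2\,d\mu$ is scale-invariant, and by Proposition~\ref{repent} its infimum over $[h_N]$ equals $Y(N,[h_N])^2>0$, so the $N$-contribution cannot be conformally annihilated. The paper sidesteps this by estimating $\int (s_h^-)^2\,d\mu_h$ instead, which Proposition~\ref{repent} allows once $Y_{\Bbb Z_k}(\bar M_k)\leq 0$ is known; the PSC region then contributes nothing automatically, and your test metrics give $\int_{\bar M_k}(s_h^-)^2\,d\mu_h\leq k\int_M (s^-_{h_\epsilon})^2\,d\mu + o(1)$ with no further work. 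With this one-symbol fix your proof is correct and matches the paper's.
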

\begin{proof}
First, we show that
$$Y_{\Bbb Z_k}(\bar{M}_k)\geq \sqrt{k}Y(M)$$ by using the standard gluing method of the ordinary Yamabe invariant.

Take a $\Bbb Z_k$-invariant metric of positive scalar curvature on  $N$, and make $k$ cylindrical ends in a $\Bbb Z_k$-symmetric way keeping the positivity of scalar curvature by performing the Gromov-Lawson surgery \cite{GL}. On each $M$ we take a metric $g$ which  approximates the Yamabe invariant of $M$, and  also make a cylindrical end likewise.
By gluing  these pieces, we have a $\Bbb Z_k$-invariant metric on $\bar{M}_k$, denoted by $h$.
For any $\varepsilon > 0$, we can arrange the Gromov-Lawson surgery\footnote{For this, one may consult a refined way of Gromov-Lawson surgery as in \cite{sung1}. Another easy way suggested by C. LeBrun in \cite{lb} is as follows. Let $W\subset M$ be a small ball around the point where the connected sum is performed. One can take a conformal change $\varphi g$ of $g$ such that $\varphi\equiv 1$ outside of $W$ and the scalar curvature of $\varphi g$ is positive on a much smaller open subset $W'$ of $W$, and $$\int_{M}(s_{\varphi g}^-)^2d\mu_{\varphi g}\leq  \int_{M}(s_g^-)^2d\mu_g+\varepsilon^2.$$ Then one can perform an ordinary Gromov-Lawson surgery on $W'\subset (M,\varphi g)$ keeping the positivity of scalar curvature  to achieve the inequality (\ref{GR-LA}). A detailed proof of this method can be found in \cite{sung-hpn}.}  so that $h$ depending  $\varepsilon$ satisfies
\begin{eqnarray}\label{GR-LA}
\int_{\bar{M}_k}(s_h^-)^2d\mu_h\leq k \int_{M}(s_g^-)^2d\mu_g+\frac{\varepsilon}{2}\leq k(Y(M))^2+\varepsilon.
\end{eqnarray}
 Since $\varepsilon >0$ is arbitrary, the application of Proposition \ref{repent}  with $r=2$ yields $$Y_{\Bbb Z_k}(\bar{M}_k)\geq \sqrt{k}Y(M).$$

To prove the reverse inequality we will show
$$\int_{\bar{M}_k} s_{\bar{g}}^2\ d\mu_{\bar{g}} \geq k(Y(M))^2$$
  for any $\Bbb Z_k$-invariant metric $\bar{g}$ on $\bar{M}_k$.
 Since $c_1(\bar{\frak{s}})$ is a $\Bbb Z_k$-monopole class of $\bar{M}_k$, there exists a solution of the Seiberg-Witten equations of $\bar{\frak{s}}$ for $\bar{g}$. Then LeBrun's Weitzenb\"ock-type argument \cite{LB5} gives
\begin{eqnarray*}
\int_{\bar{M}_k} s_{\bar{g}}^2\ d\mu_{\bar{g}} &\geq&  32\pi^2 (c_1^+(\bar{\frak{s}}))^2.
\end{eqnarray*}
Using $(c_1^+(\bar{\frak{s}}))^2\geq k\ c_1^2(\frak{s})$ (or $(c_1^+(\bar{\frak{s}}'))^2\geq k\ c_1^2(\frak{s})$) proved in Theorem \ref{grace}, we get desired
\begin{eqnarray*}
\int_{\bar{M}_k} s_{\bar{g}}^2\ d\mu_{\bar{g}} &\geq& 32\pi^2k\ c_1^2(\frak{s})=
 k(Y(M))^2,
\end{eqnarray*}
which completes the proof of the first statement. Then the second statement follows from Theorem \ref{orbi-yam}.
\end{proof}

In fact, one can easily generalize the above theorem to the statement that for any blow-up $M'$ of such $M$,
$$Y_{\Bbb Z_k}(\bar{M'}_k)=\sqrt{k}Y(M')=\sqrt{k}Y(M),$$ and $$Y_{orb}(M'\# N/\Bbb Z_k)=Y(M')=Y(M).$$

\begin{xpl}
For such an example of $M$ in the above theorem which has nonzero mod 2 Seiberg-Witten invariant, there exists a  minimal compact K\"ahler surface of nonnegative Kodaira dimension with $b_2^+(M)>1$. Certain surgeries along tori
in product manifolds of two Riemann surfaces of genus $>1$ also have such a property. For details, the readers are referred to \cite{sung2}.

But such examples of $M$ with nontrivial Bauer-Furuta invariant are not well understood enough. According to S. Bauer's computation \cite{bau}, if $X_j$ for $j=1,\cdots,4$ are minimal compact K\"ahler surfaces satisfying $$b_1(X_j)=0,\ \ b_2^+(X_j)\equiv 3\  \textrm{mod}\ 4,\ \  \sum_{j=1}^4b_2^+(X_j)\equiv 4\ \textrm{mod}\ 8,$$
then $\#_{j=1}^m X_j$ for each $m=1,\cdots,4$ is such an example of $M$.

Applying the above theorem to such an $M$ and $N=S^4$, we obtain
\begin{eqnarray*}
Y_{orb}(M\#S(L(p;q)))&=&Y_{orb}(M\# S^4/\Bbb Z_p)\\ &=& Y(M),
\end{eqnarray*}
 where $S(L(p;q))$ is the suspension of the Lens space $L(p;q)=S^3/\Bbb Z_p$ with the $\Bbb Z_p$-action given by $(z_1,z_2)\sim (e^{\frac{2\pi i}{p}}z_1,e^{\frac{2\pi iq}{p}}z_2)\in \Bbb C^2$ for coprime integers $p$ and $q$.

More examples of $N$ are given in \cite{sung5}.
\end{xpl}

\begin{rmk}
Just as the ordinary Yamabe invariant is a smooth topological invariant, the orbifold Yamabe invariant can distinguish differential structures of orbifolds. For example, let $M$ be as in the above example and $N$ be as in the above theorem. Suppose further that $M$ is simply connected. The above theorem asserts that $$Y_{orb}(M\#\overline{\Bbb CP}_2\# N/\Bbb Z_k)=Y(M\#\overline{\Bbb CP}_2)=Y(M)\leq 0.$$ On the other hand, $M\#\overline{\Bbb CP}_2$ is nonspin, and hence by Freedman's theorem \cite{freed},  $M\#\overline{\Bbb CP}_2\# N/\Bbb Z_k$ is homeomorphic to $$b_2^+(M)\Bbb CP_2\#(b_2^-(M)+1)\overline{\Bbb CP}_2\# N/\Bbb Z_k,$$ whose orbifold Yamabe invariant is positive. Therefore they are not diffeomorphic as orbifolds.

The ordinary Yamabe invariant $Y(\bar{M}_k)$ of $\bar{M}_k$ is hardly known except for very special cases \cite{IL2}. It seems plausible that it is equal to  $Y_{\Bbb Z_k}(\bar{M}_k)$ under the assumption of Theorem \ref{lastthm}.
\end{rmk}

\bigskip

\noindent{\bf Acknowledgement.} The author would like to express sincere thanks to the anonymous referee for pertinent suggestions.

\end{document}